\newcommand{\textcyr}[1]{%
 {\fontencoding{OT2}\fontfamily{wncyr}\fontseries{m}\fontshape{n}\selectfont #1}}
\newcommand{\Sha}{{\mbox{\textcyr{Sh}}}}
\newtheorem{lemma}{Lemma}[section]
\newtheorem{prop}[lemma]{Proposition}
\newtheorem{question}[lemma]{Question}
\newtheorem{claim*}{Claim}
\newtheorem{observation}[lemma]{Observation}
\theoremstyle{definition}
\newtheorem{remark}[lemma]{Remark}
\newtheorem{example}[lemma]{Example}
\newtheorem{defn}[lemma]{Definition}
\newcommand{\PP}{{\mathbb P}}
\newcommand{\CC}{{\mathbb C}}
\newcommand{\FF}{{\mathbb F}}
\newcommand{\QQ}{{\mathbb Q}}
\newcommand{\RR}{{\mathbb R}}
\newcommand{\ZZ}{{\mathbb Z}}
\newcommand{\kbar}{{k^\mathrm{alg}}}
\DeclareMathOperator{\sHom}{\mathscr{H}\text{\kern -4pt {\calligra\large om}}\,}
\newcommand{\bgamma}{\boldsymbol{\gamma}}
\newcommand{\btgamma}{\boldsymbol{\tilde{\gamma}}}
\newcommand{\calB}{{\mathcal B}}
\newcommand{\calO}{{\mathcal O}}
\newcommand{\calP}{{\mathcal P}}
\newcommand{\fp}{{\mathfrak p}}
\newcommand{\fq}{{\mathfrak q}}
\DeclareMathOperator{\HH}{H}
\DeclareMathOperator{\Aut}{Aut}
\DeclareMathOperator{\Gal}{Gal}
\DeclareMathOperator{\Sel}{Sel}
\DeclareMathOperator{\ord}{ord}
\DeclareMathOperator{\Sym}{Sym}
\DeclareMathOperator{\Div}{Div}
\DeclareMathOperator{\Pic}{Pic}
\DeclareMathOperator{\Jac}{Jac}
\DeclareMathOperator{\sep}{sep}
\DeclareMathOperator{\unr}{unr}
\DeclareMathOperator{\Sp}{Sp}
\renewcommand{\div}{\mathrm{div}}
\newcommand{\nospaceperiod}{\makebox[0pt][l]{\,.}}
\numberwithin{equation}{section}
\numberwithin{table}{section}
\title[Descent on plane quartics]{Two-cover descent on plane quartics with rational bitangents}
\author{Nils Bruin}
\address[1]{Department of Mathematics, Simon Fraser University, Burnaby, BC, Canada V5A 1S6}
\email{nbruin@cecm.sfu.ca}
\author{Daniel Lewis}
\address[2]{Department of Mathematics, The University of Arizona, 617 N. Santa Rita Ave, Tucson, AZ 85721, USA}
\email{dlewis3@math.arizona.edu}
\thanks{The first author acknowledges the support of the Natural Sciences and Engineering Research Council of
	Canada (NSERC), funding reference number RGPIN-2018-04191.}
\date{July 3, 2020}
\subjclass[2010]{Primary 11G30, 14H30; Secondary 11D41, 14H50}
\keywords{Plane quartics, rational points, local-to-global obstructions, bitangents, descent obstructions, two-covers}
\begin{document}

\begin{abstract}
We implement two-cover descent for plane quartics over $\QQ$ with all 28 bitangents rational and show that
on a significant collection of test cases, it resolves the existence of rational points. We also review a classical description of the relevant moduli space and use it to generate examples. We observe that local obstructions are quite rare for such curves and only seem to occur in practice at primes of good reduction. In particular, having good reduction at $11$ implies having no rational points. We also gather numerical data on two-Selmer ranks of Jacobians of these curves, providing evidence these behave differently from those of general abelian varieties due to the frequent presence of an everywhere locally trivial torsor.
\end{abstract}

\maketitle
\edef\marginnotetextwidth{2em}

\section{Introduction}

A central problem in arithmetic geometry is to determine if a variety $C$ over a number field $k$, for instance a nonsingular projective curve, has any $k$-rational points. The most elementary way of showing that $C(k)$ is empty is by showing that $C(k_v)=\emptyset$ for some completion $k_v$ of $k$. In that case, we say $C$ has a \emph{local obstruction} to having rational points.

We consider a more refined \emph{descent obstruction} here. Our construction can be read in elementary terms, but the theoretical motivation is enlightening. Suppose we have an unramified cover $\pi\colon D\to C$ of nonsingular proper varieties over $k$ with geometric automorphism group $\Gamma=\Aut_{\kbar}(D/C)$ satisfying $\#\Gamma=\deg(\pi)$. The \emph{twisting principle} \cite{milne:etale_cohomology}*{III.4.3(a)} gives us that the Galois cohomology set $\HH^1(k,\Gamma)$ parametrizes \emph{twists} $\pi_\gamma\colon D_\gamma\to C$, as well as a map $\bgamma\colon C(k)\to \HH^1(k,\Gamma)$ such that for $P\in C(k)$ and $\gamma=\bgamma(P)$, we have $Q\in D_\gamma(k)$ such that $\pi_\gamma(Q)=P$. This leads us to consider the associated Selmer set
\[\Sel^{(\pi)}(C/k)=\{\gamma\in \HH^1(k,\Gamma): D_\gamma(k_v)\neq \emptyset \text{ for all completions } k_v \text{ of }k\}.\]
Since the map $\bgamma$ takes values in $\Sel^{(\pi)}(C/k)$, we see that if the latter is empty then $C(k)$ is empty too. In that case we say that $C$ has a \emph{$\pi$-cover obstruction} to having rational points: $C$ has no rational points because a collection of covering varieties all have local obstructions.

The proof of the Chevalley-Weil theorem \cite{chevweil:covers} implies that $\Sel^{(\pi)}(C/k)\subset \HH^1(k,\Gamma;S)$, where the latter denotes the classes that are unramified outside the set $S$ of bad places for the cover $\pi\colon D\to C$. The set $\HH^1(k,\Gamma;S)$ is finite and explicitly computable. This means that to compute $\Sel^{(\pi)}(C/k)$ one only needs to check the local solvability of finitely many $D_\gamma$. Hence, $\Sel^{(\pi)}(C/k)$ is explicitly computable, although not necessarily efficiently.

For hyperelliptic curves, there is a well-developed theory of \emph{two-covers} \cite{BS:twocov}, where $\Gamma=\Jac_C[2]$. Their associated Selmer sets are relatively practical to compute and, as is described there, many genus two curves over $\QQ$ have no local obstruction, but can be shown to have $\Sel^{(2)}(C/\QQ)=\emptyset$. In fact it has since been shown \cite{BhargavaGrossWang2017} that in a precise way, \emph{most} hyperelliptic curves have a two-cover obstruction.

Results beyond hyperelliptic curves are sparse. The general descent theory is available in \cite{BPS:quartic}, which also provides some genus three examples, but in its full generality, the need to compute class group information of degree $28$ extensions limits large-scale experiments significantly. There is also some progress on creating an appropriate setting for arithmetic statistical techniques \cite{Thorne2016} to two-descent on Jacobians of curves of genus three, but it is presently not clear how to generalize the Bhargava--Gross--Wang approach to this setting.

In this article we endeavour to start a more systematic study by considering plane quartics $C$ with a restricted $2$-level structure; in particular $\Jac_C[2](\QQ)=(\ZZ/2\ZZ)^6$. This forces the $28$ bitangents of $C$ to be defined over $\QQ$ and has the computational and expository advantage that all required data can be expressed over $\QQ$; no algebraic number theory is required.

\begin{remark}
For a hyperelliptic curve $C$ of genus $g$, having $\Jac_C[2](k)=(\ZZ/2\ZZ)^{2g}$ implies that all $2g+2$ Weierstrass points on $C$ are rational, making two-cover descent rather uninteresting.  In this sense, two-cover descent on plane quartics has simpler non-trivial applications than on hyperelliptic curves.
\end{remark}

In Section~\ref{S:generating_quartics} we review an explicit description of the moduli space of smooth plane quartics with labelled bitangents as the space of seven labelled points in general position in $\PP^2$. For small fields we prove
\begin{prop}\label{P:small_primes}
	For $p=3,5,7$, there exist no nonsingular plane quartics over $\FF_p$ 
	with all bitangents defined over $\FF_p$.
	Over $\FF_9$, there is only one isomorphism class, represented by the Fermat quartic
	\[C_9\colon x^4+y^4+z^4=0, \text{ with }\#C_9(\FF_9)=28.\]
	Over $\FF_{11}$, there is only one isomorphism class, represented by:
	\[C_{11}\colon x^4+y^4+z^4+x^2y^2+x^2z^2 + y^2z^2=0, \text{ and } C_{11}(\FF_{11})=\emptyset.\]
\end{prop}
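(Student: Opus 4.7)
The plan is to use the explicit moduli description recalled in Section~\ref{S:generating_quartics}: a smooth plane quartic over $\FF_p$ with all $28$ bitangents rational corresponds, up to the labelling symmetries discussed there, to an ordered 7-tuple of points in \emph{general position} in $\PP^2(\FF_p)$ (no three collinear, no six on a conic), with two tuples giving isomorphic quartics precisely when they are equivalent under $\PGL_3(\FF_p)$ together with the symmetries built into the construction. This reduces the proposition to a finite enumerative question in each of the five fields.

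For $p=3$ and $p=5$, the classical arc bound states that in $\PP^2(\FF_q)$ with $q$ odd any set with no three points collinear has at most $q+1$ points. Since $4<7$ and $6<7$, the required 7-tuples simply do not exist, and a fortiori neither do the quartics. For $p=7$ the bound gives $8$, so 7-arcs do exist and one needs the refinement that every 7-arc in $\PP^2(\FF_7)$ is contained in a conic; equivalently, the largest complete arc in $\PP^2(\FF_7)$ that is strictly smaller than an oval has size $6$ (see e.g.\ Hirschfeld's classification of arcs in small projective planes). Hence all seven points, and in particular any six of them, lie on a conic, again violating general position. A self-contained alternative is a direct enumeration over the $57$ points of $\PP^2(\FF_7)$.

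For $p=9,11$ the plan is to enumerate 7-tuples in general position in $\PP^2(\FF_p)$ and reduce modulo $\PGL_3(\FF_p)$ and the symmetries of the moduli description. In each case exactly one orbit survives; running the construction of Section~\ref{S:generating_quartics} on a representative and normalising yields a quartic projectively equivalent to $C_9$ (resp.\ $C_{11}$). The count $\#C_9(\FF_9)=28$ is obtained by direct evaluation over the $91$ projective points of $\PP^2(\FF_9)$. For $C_{11}(\FF_{11})=\emptyset$ one can argue by hand using the odd-characteristic identity
\[ 2(x^4+y^4+z^4+x^2y^2+x^2z^2+y^2z^2)=(x^2+y^2)^2+(y^2+z^2)^2+(z^2+x^2)^2, \]
which shows that any $\FF_{11}$-point would give $u,v,w\in\FF_{11}$ satisfying $u^2+v^2+w^2=0$ with each of $u,v,w$ expressible as a sum of two squares drawn from the set $\{0,1,3,4,5,9\}$ of squares in $\FF_{11}$. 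Scaling so that $x=1$ (and treating the $x=0$ case separately via $1+t+t^2=0$ having discriminant $-3$, a non-square mod $11$) reduces the problem to checking that no pair $(a,b)$ with $a,b\in\{0,1,3,4,5,9\}$ satisfies $1+a+b+a^2+ab+b^2\equiv 0\pmod{11}$, a finite case analysis.

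The main obstacle is the $p=7$ case: the arc bound handles $p=3,5$ immediately and the $p=9,11$ cases amount to finite calculations, but showing that every 7-arc in $\PP^2(\FF_7)$ lies on a conic is the essential nontrivial input and relies on the classification of arcs in $\PP^2(\FF_q)$ for small $q$.
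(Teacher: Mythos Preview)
Your proposal is correct and follows the paper's own route: reduce to the moduli description of Section~\ref{S:generating_quartics} (seven points in general position in $\PP^2$) and then carry out a finite verification over each of the five fields. The paper's proof simply asserts that for $p=3,5,7$ a direct check (or \cite{BanwaitFiteLoughran2019}*{Proposition~4.4}) shows no seven points in general position exist, and for $\FF_9$, $\FF_{11}$ enumerates the $40$, resp.\ $1440$, triples completing the standard simplex and checks that all resulting quartics are isomorphic to $C_9$, resp.\ $C_{11}$; the point counts are implicit in that enumeration.

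Where you differ is in the justification of the small-prime steps. For $p=3,5$ you invoke the oval bound rather than a search, and for $p=7$ you use Hirschfeld's classification of complete arcs (every $7$-arc in $\PP^2(\FF_7)$ extends to a conic) in place of a brute-force check; this is a nice conceptual upgrade over the paper's ``finite amount of work''. For $C_{11}(\FF_{11})=\emptyset$ you supply a by-hand argument via the substitution $a=y^2$, $b=z^2$, which the paper does not spell out. (Your sum-of-three-squares identity is correct but ultimately unused: the actual verification you perform is the direct check of $1+a+b+a^2+ab+b^2\not\equiv 0\pmod{11}$ for $a,b$ squares, which does not require it.) One point the paper flags that you do not: the Jacobian-determinant construction \eqref{E:plane_sextic_model} is problematic in characteristic~$3$, so for $\FF_9$ one should lift the configuration to characteristic~$0$ before producing the quartic and then reduce; you may want to acknowledge this.
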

In particular,  a plane quartic $C$ over $\QQ$ with rational bitangents has bad reduction at $3$, $5$, and $7$. If it has good reduction at $11$, then it has a local obstruction there. The curve $C_9$ attains the maximum number of rational points for a genus three curve over $\FF_9$. Its rational points are contacts of the $28$ hyperflexes. Both $C_9$ and $C_{11}$ are reductions of the Klein quartic
$x^4+y^4+z^4-\tfrac{3}{2}(1+\sqrt{-7})(x^2y^2+x^2z^2+y^2z^2)$.

Section~\ref{S:twocover} describes, given a smooth plane quartic $C$ with rational bitangents, an explicit model for a two-cover $\pi_\gamma\colon D_\gamma\to C$, with $\Gamma=(\ZZ/2\ZZ)^6$ as a Galois-module. This directly establishes a description of two-covers and their twists, without appealing to \'etale cohomology.

In Section~\ref{S:selmersets} we describe an algorithm to compute,  sets $\Sel^{(2)}(C/k, N)\supset \Sel^{(2)}(C/k)$, for integers $N\geq 1$, with equality holding for $N\geq66569$, and, in practice, for much smaller values of $N$ already. The algorithm is reasonably efficient and can be applied in many practical situations.

In Section~\ref{S:results} we describe a numerical experiment, where we tabulate the behaviour of $\Sel^{(2)}(C/\QQ)$ for various quartics $C$. We consider a systematic collection of $81070$ moduli points with coordinates from $\{-6,\ldots,6\}$, as well as a collection of $70000$ randomly selected points with coordinates from $\{-40,\ldots,40\}$.

\begin{observation} For all curves $C$ in our collections with $\Sel^{(2)}(C/\QQ)\neq \emptyset$, we can find a point $P\in C(\QQ)$.
\end{observation}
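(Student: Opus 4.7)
The plan is to verify the observation directly on every curve in the two enumerated collections; it is experimental in nature, so the ``proof'' consists in executing the whole pipeline and reporting the outcome. The pipeline has three stages: (i)~generate the curves, (ii)~compute their Selmer sets, (iii)~for each curve with non-empty Selmer set, locate a rational point. Stage~(i) uses the moduli description from Section~\ref{S:generating_quartics}: each labelled tuple of seven points in $\PP^2(\QQ)$ in general position, with coordinates in the prescribed box $\{-6,\dots,6\}$ (respectively randomly chosen from $\{-40,\dots,40\}$), produces a smooth plane quartic $C$ with all $28$ bitangents rational, and I would extract an explicit quartic equation from each such tuple.

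For stage~(ii) I would apply the algorithm of Section~\ref{S:selmersets}. Since that algorithm produces a decreasing chain of over-approximations $\Sel^{(2)}(C/\QQ,N)\supset\Sel^{(2)}(C/\QQ)$ stabilising at $N\leq 66569$, a pragmatic strategy is to run it with modest $N$ first, discard all curves for which the over-approximation is already empty, and only for the surviving curves increase $N$ until the set stabilises (or reaches the provably sharp bound). This keeps the bulk of the work on the cheap end and isolates the small subset of curves that need to be subjected to the point search.

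For stage~(iii) I would exploit the two-covers themselves rather than search on $C$ directly. Using the explicit construction of $\pi_\gamma\colon D_\gamma\to C$ from Section~\ref{S:twocover}, each surviving Selmer class yields a concrete twist $D_\gamma$, and by the twisting principle any rational point on any $D_\gamma$ projects to a rational point on $C$. Because the passage to $D_\gamma$ corresponds to extracting a square root of the obstruction, $\QQ$-points on $D_\gamma$ typically have much smaller height than the corresponding $\QQ$-points on $C$, which is exactly what makes descent useful for point finding. I would therefore run a bounded-height search on each $D_\gamma$ for the surviving Selmer classes, push any point found back down to $C$, and, as a safety net, also run a direct bounded-height search on $C$.

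The main obstacle is that stage~(iii) comes with no theoretical guarantee: a rational point of extremely large height could escape any fixed search, and if even one such curve in the collection eluded us the observation would remain unverified (and could a priori be false). The content of the observation is precisely that this does \emph{not} happen for the $81070+70000$ curves tested once descent has been used to lower the search height. In practice the risk is controlled by (a)~keeping the input coordinate boxes small so that heights on $D_\gamma$ remain computationally manageable, (b)~iterating the descent-and-search loop on any curve where a single round fails to locate a point, and (c)~explicitly listing any residual curves for which the Selmer set is non-empty but no rational point has been found, so that the observation either holds vacuously on such a list or is falsified by it.
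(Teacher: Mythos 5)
Your overall plan coincides with what the paper actually does: the Observation is empirical, and its ``proof'' is the execution of the generate--descend--search pipeline on collections \textbf{A} and \textbf{B} (the paper runs Algorithm~\ref{A:TwoCoverDescent} with $N=50$ and reports the outcome in Table~\ref{T:results}). Two points of divergence are worth noting. First, in stage~(iii) the paper does not search on the two-covers $D_\gamma$ at all. Its first and dominant source of rational points is the $56$ bitangent contact points: for roughly $42\%$ of collection \textbf{A} (and $6.9\%$ of \textbf{B}) some contact point is already rational, so no search is needed. Only for the residual curves does it run \texttt{PointSearch}, and then on the plane sextic model \eqref{E:plane_sextic_model} or the plane quartic itself, with a height bound that had to reach $10^7$ in two cases. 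Your alternative of searching on the twists $D_\gamma$ is theoretically legitimate (any $\QQ$-point of $D_\gamma$ maps to one of $C$, and the height-halving heuristic is the standard motivation for descent-assisted search), but here the covers are genus-$129$ curves cut out by an octic, $25$ quadrics and $28$ sextics in $\PP^{27}$, so a naive bounded-height search on them is far less practical than on a plane model; you would at minimum want to exploit the projection to the $(w_1:\cdots:w_7)$ model. Second, your plan to keep increasing $N$ until the Selmer set stabilises is unnecessary for verifying the Observation: once a rational point $P$ is found, $\bgamma(P)\in\Sel^{(2)}(C/\QQ)$ certifies the claim for that curve, and if the over-approximation at some $N$ is empty the claim is vacuous; the paper correctly stops at $N=50$. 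Your closing caveat --- that a point of enormous height could escape any fixed search, so the verification is contingent on the searches succeeding --- is exactly the honest status of the Observation, which the paper acknowledges by leaving open the Question that immediately follows it.
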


This leaves the following question, which we fully expect to have an affirmative answer, but remains open for now.

\begin{question} Is it possible to
	construct a smooth plane quartic $C$ over $\QQ$ with rational bitangents such that $\Sel^{(2)}(C/\QQ)\neq \emptyset$ but $C(\QQ)=\emptyset$?
\end{question}

\begin{remark}
For a considerable number of curves in our collections we also get information on the $2$-Selmer groups of their Jacobians. The data 
matches the distribution conjectured in \cite{PoonenRains2012}*{Conjecture~1.1} quite closely, but only after taking into account that the $\Jac_C$-torsor representing $\Pic^1$ is very frequently everywhere locally trivial. Since non-hyperelliptic curves often have points everywhere locally, this phenomenon should be general: one should expect Jacobians to exhibit special arithmetic behaviour.
\end{remark}
This work is based on the master's thesis \cite{Lewis2019} of the second author.

\section{Plane quartics and their bitangents}
\label{S:bitangents}

In this section we collect the classical combinatorics and geometry of bitangents and theta characteristics on non-hyperelliptic curves of genus three. See \cite{CAG}*{Chapter~6} or \cite{GrossHarris2004} for a more comprehensive modern treatment.

Let $k$ be a field of characteristic different from $2$ and let $C$ be a curve of genus three over $k$. Then $\Jac(C)[2]$ is a $0$-dimensional separated group scheme of degree $64$ and exponent $2$, equipped with a non-degenerate alternating bilinear pairing. Indeed, the automorphism group of $\Jac(C)[2]$ is $\Sp_6(\FF_2)$.

\begin{defn} A \emph{theta characteristic} on a curve $C$ of genus $g$ is a divisor class $\theta\in\Pic^{g-1}(C)$ such that $2\theta$ is the canonical class. The \emph{parity} of $\theta$ is determined by the parity of the dimension of the Riemann-Roch space $\HH^0(C,\theta)$.
\end{defn}

It is a classical result \cite{GrossHarris2004}*{Proposition~1.11} that a curve of genus $g$ has $2^{g-1}(2^g+1)$ even and $2^{g-1}(2^g-1)$ odd theta characteristics. For $g=3$ and $C$ non-hyperelliptic it is easily checked that $h^0(C,\theta)\leq 1$, so the odd theta characteristics are exactly the ones that admit a (unique) effective representative.

The canonical model of a non-hyperelliptic genus three curve $C$ is a quartic in $\PP^2$:
\[C\colon f(x,y,z)=0, \text{ with }f\in k[x,y,z]\text{ homogeneous of degree four.}\] Since canonical classes are exactly line sections $C\cdot l$, we see there are $28$ lines $l$ such that $C\cdot l=2\theta$, where $\theta$ is a degree two effective divisor representing a theta characteristic: we recover the $28$ bitangents of a smooth plane quartic. Fix for each bitangent line $l$, a linear form $\ell$ describing the line.

\begin{lemma}\label{L:lin_indep_bitangents}
Let $C$ be a smooth plane quartic. Then no seven distinct bitangents pass through a single point.
\end{lemma}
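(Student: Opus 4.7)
The plan is to project $C$ from the hypothetical common point $P$ and apply Riemann--Hurwitz to bound how many bitangents through $P$ can contribute to the ramification divisor.

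First I would dispose of the easy case $P\in C$. If $\ell$ is a bitangent through $P$ with tangency points $Q_1,Q_2$, then set-theoretically $\ell\cap C=\{Q_1,Q_2\}$, so $P$ must coincide with $Q_1$ or $Q_2$, i.e.\ $\ell$ is the tangent line to $C$ at $P$. Since $C$ is smooth, this tangent line is unique, so at most one bitangent passes through any point of $C$.

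Next, assume $P\notin C$ and let $\pi\colon C\to\PP^1$ be projection from $P$, sending $Q\in C$ to the line $\overline{PQ}$. A general line through $P$ meets the plane quartic $C$ in four distinct points, so $\pi$ has degree $4$. Riemann--Hurwitz (in the tame setting, which is fine since $\Char k\neq 2$) then gives
\[
2g(C)-2 \;=\; 4\bigl(2g(\PP^1)-2\bigr)+\deg R,
\]
so $\deg R = 4-(-8) = 12$. Now if $\ell$ is a bitangent through $P$ with tangency points $Q_1,Q_2\neq P$, then the scheme-theoretic intersection $\ell\cdot C=2Q_1+2Q_2$ is exactly the fibre $\pi^{-1}([\ell])$, so $\pi$ is ramified with index $2$ at each of $Q_1$ and $Q_2$, contributing $(2-1)+(2-1)=2$ to the ramification divisor.

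Hence seven distinct bitangents through $P$ would contribute at least $7\cdot 2=14$ to $\deg R$, contradicting $\deg R=12$. I expect no real obstacle here: the only thing to verify carefully is that the two tangency points of each bitangent through $P$ are genuinely distinct from $P$ (handled by the $P\in C$ case above) and from each other (they are, because $\ell\cdot C=2Q_1+2Q_2$ has support of size $2$ for an honest bitangent, as opposed to a tangent line with contact of order $4$ at a single hyperflex, which would still contribute $(4-1)=3\geq 2$ to $R$ and does not change the count).
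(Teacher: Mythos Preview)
Your proof is correct and follows essentially the same approach as the paper's: dispose of the case $P\in C$ by noting that each bitangent through $P$ would have to be the tangent line at $P$, then project from $P\notin C$ and bound the ramification via Riemann--Hurwitz. Your handling of the $P\in C$ case and of hyperflexes is more explicit than the paper's terse version, but the core argument is identical.
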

\begin{proof}
Suppose $l_1,\ldots,l_7$ intersect in $P_0$. If $P_0$ were to lie on $C$ it would be singular, so it does not. Hence projecting away from $P_0$ gives a degree four map $C\to\PP^1$. Since $l_i\cdot C$ is a fiber of this projection, the ramification divisor has degree at least $2\cdot 7$. But that exceeds the degree $12$ given by Riemann-Hurwitz.
\end{proof}

Let $\theta_1,\theta_2$ be two odd theta-characteristics. Then $2(\theta_1-\theta_2)=\div(\ell_1/\ell_2)$, where we regard the quotient of linear forms as a rational function on $C$. We see that $[\theta_2-\theta_1]\in \Pic^0(C)[2]$. As it turns out, all nonzero $2$-torsion classes admit such a representative, in fact, $\binom{28}{2}/63=6$ of them. We see that $\theta_1-\theta_2$ and $\theta_3-\theta_4$ are linearly equivalent precisely when $\theta_1+\cdots+\theta_4$ is twice canonical. For bitangent forms, this leads to the following concept.
\begin{defn}\label{D:syzquad}
We say a quadruple of bitangent forms $\fq=\{\ell_1,\ldots,\ell_4\}$ is a \emph{syzygetic quadruple} if their contact points with $C$ lie on a conic. This means there are constants $\delta_\fq,c_\fq\in k^*$ and a quadratic form $Q_\fq\in k[x,y,z]$ such that
\begin{equation}\label{E:delta_equation}
\ell_1\ell_2\ell_3\ell_4=\delta_\fq Q_\fq^2 + c_\fq f.
\end{equation}
\end{defn}
There are 315 syzygetic quadruples. We say a triple of bitangents is \emph{syzygetic} if it is part of a syzygetic quadruple. If it is, then it is part of only one.
\begin{defn}
We say that a set of seven bitangent forms $\{\ell_1,\ldots,\ell_7\}$ is an \emph{Aronhold set} if none of its triples are syzygetic.
\end{defn}
There are $288$ Aronhold sets. For an Aronhold set, write $\{\theta_1,\ldots,\theta_7\}$ for the corresponding theta characteristics. Then $\theta_1+\cdots+\theta_7-3\kappa_C$ is again a theta characteristic: an even one. We see that each even theta characteristic has $288/36=8$ Aronhold sets associated with it. Additionally, one can check that $\{\theta_1-\theta_7,\ldots,\theta_6-\theta_7\}$ forms a basis for $\Pic(C)[2]$.

It follows that specifying a labelled Aronhold set on a smooth plane quartic amounts to marking a $2$-level structure on its Jacobian. The converse holds too.

\begin{prop}[\cite{GrossHarris2004}]
	The following two moduli spaces are naturally isomorphic.
	\begin{itemize}
		\item non-hyperelliptic genus three curves with a labelled Aronhold set
		\item non-hyperellipic genus three curves with full $2$-level structure.
	\end{itemize}
\end{prop}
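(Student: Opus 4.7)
The plan is to exhibit a $\Sp_6(\FF_2)$-equivariant fibrewise bijection between labelled Aronhold sets on $C$ and symplectic bases of $\Jac(C)[2]$, and then to observe that both moduli spaces are étale torsors under $\Sp_6(\FF_2)$ over the moduli of non-hyperelliptic genus three curves, so that any equivariant algebraic bijection on geometric fibres extends to an isomorphism of moduli. A first sanity check supports the setup: a fixed $C$ carries $288\cdot 7!=1\,451\,520=|\Sp_6(\FF_2)|$ labelled Aronhold sets, matching the number of symplectic bases of $\Jac(C)[2]$.

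For the forward map the excerpt has already built the essential linear-algebraic input: given a labelled Aronhold set with associated odd theta characteristics $\theta_1,\ldots,\theta_7$, the six classes $e_i:=[\theta_i-\theta_7]$ form a basis of $\Jac(C)[2]$. To promote this to a symplectic basis, the crucial classical ingredient is the pairing formula identifying $\langle[\theta_a-\theta_b],[\theta_c-\theta_d]\rangle$ with the syzygetic/azygetic character of the quadruple $\{\theta_a,\theta_b,\theta_c,\theta_d\}$; concretely, the Weil pairing vanishes precisely when the quadruple is syzygetic. Since an Aronhold set contains no syzygetic triple, the pairing table of the $e_i$ takes a single explicit combinatorial form, and a universal, labelling-equivariant change of basis then converts it into the standard symplectic basis $(a_i,b_i)_{i=1}^{3}$.

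For the reverse direction, a full $2$-level structure together with the distinguished even theta characteristic $\theta_0=(\theta_1+\cdots+\theta_7)-3\kappa_C$ (which can be read off combinatorially from the symplectic coordinates, and conversely is determined by the Aronhold data) singles out the $28$ odd elements of $\Jac(C)[2]$, and the labelled Aronhold set is recovered as the unique seven-tuple of odd theta characteristics whose pairwise differences realize the prescribed symplectic basis after the universal change of variables above. The two constructions are then mutually inverse by construction. The main technical obstacle is the syzygetic pairing calculation: establishing the Weil-pairing formula on theta-characteristic differences, and checking that the resulting table on $e_1,\ldots,e_6$ is indeed $\Sp_6(\FF_2)$-conjugate, by a canonical recipe compatible with the $S_7$-action on labellings, to a standard symplectic form. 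Once this is settled, naturality in $C$ is automatic, since the maps are polynomial in the defining equation of $C$ and the bijection was constructed equivariantly.
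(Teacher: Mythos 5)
The paper offers no proof of its own here: it cites Gross--Harris, and the only hint in the text is the remark just above that $\{\theta_1-\theta_7,\ldots,\theta_6-\theta_7\}$ forms a basis of $\Pic(C)[2]$, which is exactly the starting point of your forward map. Your torsor/counting sanity check ($288\cdot 7! = |\Sp_6(\FF_2)|$) is correct, and the overall strategy is a reasonable reconstruction of the classical argument.

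One place to be careful is the Weil-pairing formula you invoke. The Gram matrix of the $e_i = [\theta_i-\theta_7]$ is computed from pairings $\langle \theta_i-\theta_7,\ \theta_j-\theta_7\rangle$, which involve \emph{triples} (with $\theta_7$ repeated), not quadruples of four distinct theta characteristics, so the statement you quote (``the quadruple pairing vanishes iff syzygetic'') does not directly apply. What you actually need is the triple version, namely
\[\langle \theta_a-\theta_b,\ \theta_b-\theta_c\rangle \ \equiv\ h^0(\theta_a+\theta_b-\theta_c)-1 \pmod 2,\]
which equals $1$ precisely when the triple $\{\theta_a,\theta_b,\theta_c\}$ is azygetic: the theta characteristic $\theta_a+\theta_b-\theta_c=\theta_a+\theta_b+\theta_c-\kappa_C$ is then even, and on a non-hyperelliptic genus-three curve an even theta characteristic has $h^0=0$. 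Since every triple in an Aronhold set is azygetic, the Gram matrix of $e_1,\ldots,e_6$ is the all-ones matrix with zero diagonal, which is indeed non-degenerate over $\FF_2$ and admits a canonical symplectic change of basis; after fixing that, the rest of your forward construction is fine. For the inverse direction you should also make explicit that the $2$-level structure canonically determines the even theta characteristic via the Arf form $q_0(\sum x_ia_i+y_ib_i)=\sum x_iy_i$, after which $\theta_7=\theta_{\mathrm{even}}+e_1+\cdots+e_6$ and $\theta_i=\theta_7+e_i$ recover the labelled Aronhold set; one then needs to check (again by the quadratic-form dictionary) that the $\theta_i$ so produced are odd and pairwise azygetic.
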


There is a unique conjugacy class $\Sym(8)\subset \Sp_6(\FF_2)$. It is of length $36$ and it corresponds to the stabilizer of an even theta characteristic. The action can be made explicit by labelling the bitangents by
\begin{equation}\label{E:bitangent_labelling}
\{\ell_{ij}=\ell_{\{i,j\}}: i\in \{0,\ldots,7\}, j \in \{i+1,\ldots,7\}\},
\end{equation}
with $\Sym(8)$ acting in the obvious way on the subscripts. This labelling can be chosen in such a way that the syzygetic quadruples come in two $\Sym(8)$-orbits: one of length $210$ and one of length $105$, represented by, respectively,
\begin{equation}
\{\ell_{01},\ell_{12},\ell_{23},\ell_{03}\}\text{ and }
\{\ell_{01},\ell_{23},\ell_{45},\ell_{67}\}.\label{E:syz_quad}
\end{equation}
We see that for $i=0,\ldots,7$, we have the Aronhold sets $\{\ell_{ij}: j\neq i\}$. We sometimes suppress $i=0$ in our indices, so $\ell_{0j}=\ell_j$. 

\begin{prop}\label{P:aronhold_basis}
Let $\ell_1,\ldots,\ell_7$ be an Aronhold set of bitangent forms on a smooth plane quartic $C\colon f(x,y,z)=0$. Then the square class of each of the other bitangents $\ell_{ij}$ is determined in the sense that there is a constant $\delta_{ij}\in k^\times$ and a cubic form $g_{ij}\in k[x,y,z]$ such that
\[\Big(\prod_{n\notin\{i,j\}} \ell_n\Big)\ell_{ij} \equiv \delta_{ij}g_{ij}^2 \pmod{fk[x,y,z]}.\]
\end{prop}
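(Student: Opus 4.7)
The plan is to identify the divisor class on $C$ of the left-hand side as $6\kappa_C$, exhibit a cubic $g$ over $\kbar$ whose square has the same divisor on $C$, and then descend $g$ to $k$ by Hilbert 90.

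First I would use the theta-characteristic bookkeeping of the previous paragraphs. Recall the even theta characteristic $\theta_0':=\theta_1+\cdots+\theta_7-3\kappa_C$ attached to the Aronhold set, and the classical fact, consistent with the labelling \eqref{E:bitangent_labelling}, that
\[\theta_{ij}\sim\theta_i+\theta_j-\theta_0'\quad\text{in }\Pic^2(C)\quad\text{for every }\{i,j\}\subset\{1,\ldots,7\}.\]
Summing and using $\theta_0'\sim\sum_{n=1}^7\theta_n-3\kappa_C$,
\[D\;:=\;\theta_{ij}+\sum_{n\in\{1,\ldots,7\}\setminus\{i,j\}}\theta_n\;\sim\;\sum_{n=1}^7\theta_n-\theta_0'\;=\;3\kappa_C,\]
so $D$ is an effective divisor of degree $12$ in the class $3\kappa_C$, and $\divv_C\bigl(\ell_{ij}\prod_{n\notin\{i,j\}}\ell_n\bigr)=2D$.

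Next I would exhibit $g$. Since $\deg f=4>3$, the natural map $\kbar[x,y,z]_3\to\HH^0(C,3\kappa_C)$ is an isomorphism, so $|3\kappa_C|$ is cut out by cubics in $\PP^2$. Pick a cubic $g\in\kbar[x,y,z]_3$ with $\divv_C(g)=D$; it is unique up to $\kbar^\times$ (two such cubics have a ratio that is a nowhere-zero nowhere-pole rational function on $C$, hence constant, and the congruence modulo $f$ gives equality of cubics since $\deg f>3$). Both $\ell_{ij}\prod_{n\notin\{i,j\}}\ell_n$ and $g^2$ are then sections of $\mathcal{O}_C(6)$ with divisor $2D$, so their quotient is a regular function on $C$ without zeros or poles, hence a nonzero constant $\delta\in\kbar^\times$; equivalently $\ell_{ij}\prod\ell_n-\delta g^2$ vanishes on $C$ and lies in $f\kbar[x,y,z]$.

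The remaining step is Galois descent. For $\sigma\in\Gal(\kbar/k)$, the $k$-rationality of $\ell_{ij}\prod\ell_n$ combined with the uniqueness of $g$ forces $\sigma(g)=\chi(\sigma)\,g$ and $\sigma(\delta)=\chi(\sigma)^{-2}\delta$ for some cocycle $\chi\colon\Gal(\kbar/k)\to\kbar^\times$. By Hilbert 90 there is $\alpha\in\kbar^\times$ with $\chi(\sigma)=\sigma(\alpha)/\alpha$; replacing $(g,\delta)$ by $(\alpha^{-1}g,\alpha^2\delta)$ yields Galois-invariant representatives $g_{ij}\in k[x,y,z]_3$ and $\delta_{ij}\in k^\times$. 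The only non-formal input is the theta-characteristic identity $\theta_{ij}\sim\theta_i+\theta_j-\theta_0'$, which rests on the $\Sym(8)$-labelling being set up so that the $21$ non-Aronhold bitangents are indexed by pairs from $\{1,\ldots,7\}$ in exactly this way; this is standard, see e.g.\ \cite{GrossHarris2004} or \cite{CAG}*{Ch.~6}. Everything after that---the divisor calculation, the pullback of $|3\kappa_C|$ by cubics, and Hilbert 90---is formal.
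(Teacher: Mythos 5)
Your proof is correct, but it takes a genuinely different route from the paper's. The paper works entirely at the level of the explicit syzygetic-quadruple identities of Definition~\ref{D:syzquad}: with $\{i,j\}=\{6,7\}$ it multiplies the four relations for $\{\ell_1,\ell_{23},\ell_{45},\ell_{67}\}$, $\{\ell_2,\ell_7,\ell_{23},\ell_{37}\}$, $\{\ell_4,\ell_7,\ell_{45},\ell_{57}\}$, $\{\ell_3,\ell_5,\ell_{37},\ell_{57}\}$ to exhibit directly that $\ell_{67}\prod_{n\le 5}\ell_n$ has even divisor on $C$, and then appeals to projective normality of the canonical curve to produce the cubic $g_{ij}$. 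Rationality of $g_{ij}$ and $\delta_{ij}$ is automatic there because the divisor is $k$-rational and the $\delta_\fq$ are already in $k^\times$, so no separate descent step is needed. You instead compute the divisor class abstractly via the theta-characteristic identity $\theta_{ij}\sim\theta_i+\theta_j-\theta_0'$, identify cubics with $\HH^0(C,3\kappa_C)$ by Riemann--Roch and $\deg f=4$, and then descend by Hilbert~90. The trade-off is essentially conceptual versus concrete: your argument makes the divisor-class bookkeeping transparent and avoids having to find the right combination of quadruples, while the paper's version produces the identity in the exact form ($\delta_\fq$ and $Q_\fq$ data) that Algorithm~\ref{A:TwoCoverDescent} actually consumes. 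Incidentally, your Hilbert~90 step could be bypassed the same way the paper implicitly does: $D$ is a Galois-stable effective divisor on $C/k$, so projective normality of $C$ over $k$ already gives a $k$-rational cubic with $\divv_C(g)=D$, and then $\delta\in k^\times$ follows from the uniqueness you established.
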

\begin{proof}
To ease notation, set $\{i,j\}=\{6,7\}$. By combining the syzygetic quadruples
\[\{\ell_1,\ell_{23},\ell_{45},\ell_{67}\},
\{\ell_2,\ell_7,\ell_{23},\ell_{37}\},
\{\ell_4,\ell_7,\ell_{45},\ell_{57}\},
\{\ell_3,\ell_5,\ell_{37},\ell_{57}\} \]
we get that the left hand side has a divisor with even multiplicities. The existence of $g_{ij}$ follows from the projective normality of $C$.
\end{proof}

\section{Generating plane quartics with rational bitangents}
\label{S:generating_quartics}
We use del Pezzo surfaces of degree two (see \cite{CAG}*{6.3.3} or \cite{GrossHarris2004}) to describe a classical link between non-hyperelliptic genus three curves with $2$-level structure and point configurations in the plane.
\begin{defn}
We say seven points $p_1,\ldots,p_7\in\PP^2$ lie in \emph{general position} if no three are collinear and no six lie on a conic.
\end{defn}
Given seven points $p_1,\ldots,p_7\in\PP^2$ in general position, we obtain a del Pezzo surface $X$ of degree two by blowing up the seven points. In fact we obtain a labelling of the $56$ exceptional curves on $X$:
\begin{itemize}
	\item Seven exceptional components $E'_i$ above the blown-up points $p_i$
	\item Seven proper transforms $E_i$ of cubics $\tilde{E}_i$ through the seven points with a nodal singularity at $p_i$
	\item $21$ proper transforms $E_{ij}$ of lines $\tilde{E}_{ij}$ connecting $p_i$ and $p_j$.
	\item $21$ proper transforms $E'_{ij}$ of conics $\tilde{E}'_{ij}$ through $\{p_1,\ldots,p_7\}\setminus\{p_i,p_j\}$.
\end{itemize}
A del Pezzo surface $X$ of degree $2$ comes equipped with a $2:1$ 
map $X\to\PP^2$, given by the anticanonical system $|-\kappa_X|$ on $X$.
The branch locus $C$ in $\PP^2$ is a smooth plane quartic.

If $X$ is obtained as the blow-up of $p_1,\ldots,p_7\in\PP^2$ then there is an induced rational map $\phi$ making the following diagram commute.
\[
\begin{tikzcd}
&X \arrow[rd,"2:1"]\arrow[ld,"\mathrm{bl}"']\\
\PP^2\arrow[rr, dashrightarrow,"\phi"]&&\PP^2
\end{tikzcd}
\]
Let $\phi_1,\phi_2,\phi_3$ generate the space of cubics passing through $p_1,\ldots,p_7$. It is straightforward to check that the $\mathrm{bl}^*\phi_i$ generate $|-\kappa_X|$, so $\phi=(\phi_1:\phi_2:\phi_3)$. The branch locus of $\phi$ is contained in the plane sextic curve
\begin{equation}\label{E:plane_sextic_model}
C'\colon \det\left(\frac{\partial \phi_i}{\partial x_j}\right)_{ij}=0
\end{equation}
and indeed, $C=\phi(C')$ turns out to be a plane quartic.

Since $\tilde{E}_i$ and $\tilde{E}_{ij}\cup\tilde{E}'_{ij}$ are loci described by cubics in the span of $\phi_1,\phi_2,\phi_3$, they map to lines, whose defining forms we denote by $\ell_i$ and $\ell_{ij}$ respectively.
\begin{lemma}
The labelling described above is compatible with \eqref{E:bitangent_labelling}, so $\{\ell_1,\ldots,\ell_7\}$ is an Aronhold set and \eqref{D:syzquad} describes the syzygetic quadruples.
\end{lemma}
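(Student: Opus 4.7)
The plan is to verify two assertions: that $\{\ell_1, \ldots, \ell_7\}$ is an Aronhold set, and that the syzygetic quadruples under our labelling fall into the two $\Sym(8)$-orbits described by \eqref{E:syz_quad}. Both reduce to intersection-theoretic computations in $\Pic(X)$, using the standard basis $\{h, e_1, \ldots, e_7\}$ in which $-\kappa_X = 3h - \sum_i e_i$ and the four families of $(-1)$-curves have classes $E'_i \sim e_i$, $E_i \sim 3h - \sum_k e_k - e_i$, $E_{ij} \sim h - e_i - e_j$, and $E'_{ij} \sim 2h - \sum_{k\neq i,j} e_k$.

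For the Aronhold claim, I would compute $E_i \cdot E_j = 0$ for $i \neq j$ directly from these classes, so $\{E_1, \ldots, E_7\}$ is a collection of seven pairwise disjoint $(-1)$-curves. Blowing these down gives another presentation of $X$ as a blow-up of $\PP^2$ at seven points in general position, and under the classical dictionary between such alternative blowdowns and Aronhold sets of bitangents on the branch quartic, the images $\{\ell_1, \ldots, \ell_7\}$ form an Aronhold set.

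For the syzygetic claim, I would first establish the criterion that a quadruple of bitangents is syzygetic if and only if one can choose an exceptional curve $F^*$ from each of the four associated pairs $\{F, F'\}$ so that $F^*_1 + F^*_2 + F^*_3 + F^*_4 \sim -2\kappa_X$ in $\Pic(X)$. Via the isomorphism $\phi|_{\tilde{C}}\colon \tilde{C} \xrightarrow{\sim} C$, the restriction of any $(-1)$-curve $F$ to the ramification divisor $\tilde{C}$ is an effective degree-two representative of the odd theta characteristic corresponding to the bitangent $\phi(F)$, with $F|_{\tilde{C}}$ and $F'|_{\tilde{C}}$ representing the same class since the Bertini involution fixes $\tilde{C}$ pointwise. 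The syzygetic condition $\theta_1 + \cdots + \theta_4 \sim 2\kappa_C$ then translates into the Picard relation above, the ambiguity in choosing $F^*_i$ accounting for the kernel of the restriction $\Pic(X) \to \Pic(\tilde{C})$.

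Granted the criterion, direct Picard-group arithmetic verifies each orbit representative. For $\{\ell_{01}, \ell_{12}, \ell_{23}, \ell_{03}\}$, the choice $\{E_1, E'_{12}, E_{23}, E'_3\}$ yields
\[
(3h - \sum_k e_k - e_1) + (2h - e_3 - e_4 - e_5 - e_6 - e_7) + (h - e_2 - e_3) + e_3 = 6h - 2\sum_k e_k = -2\kappa_X.
\]
For $\{\ell_{01}, \ell_{23}, \ell_{45}, \ell_{67}\}$, the choice $\{E_1, E_{23}, E_{45}, E_{67}\}$ similarly sums to $-2\kappa_X$. Since the $\Sym(7)$ permuting the $p_i$ acts geometrically on $X$, the full $\Sym(7)$-orbits of these two representatives (of sizes $105$ each) consist of syzygetic quadruples. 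An analogous check for a 4-cycle supported in $\{1, \ldots, 7\}$, e.g.\ $\{\ell_{12}, \ell_{23}, \ell_{34}, \ell_{14}\}$, handles the third $\Sym(7)$-orbit of size $105$, completing the $\Sym(8)$-orbit of size $210$. The total $105+105+105 = 315$ matches the classical count of syzygetic quadruples, so the labelling agrees with \eqref{E:bitangent_labelling}.

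The main obstacle will be the second step: making the syzygetic criterion rigorous requires identifying the kernel of the restriction map $\Pic(X) \to \Pic(\tilde{C})$ and showing the freedom in the sign choices $F^*_i$ accounts exactly for this kernel. Once that is in place, the remaining checks are combinatorial and computational.
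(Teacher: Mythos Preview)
Your approach differs substantially from the paper's. The paper argues in two sentences: either invoke association of point sets and the realization of $C'$ via $|\theta_{\mathrm{even}}+\kappa_C|$ from \cite{GrossHarris2004}, or verify one explicit example and use that the assertion is a discrete invariant on an irreducible moduli space. Your route is a direct intersection-theoretic computation on $X$, which is longer but more self-contained.

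Your plan works, and the obstacle you flag is not actually an obstacle. You only need the \emph{forward} implication of your criterion: if some choice of lifts satisfies $F_1^*+\cdots+F_4^*\sim -2\kappa_X$, then restricting to $\tilde C$ gives $\theta_1+\cdots+\theta_4\sim 2\kappa_C$, so the quadruple is syzygetic. No kernel analysis is needed for this direction. Your three orbit checks then exhibit $315$ syzygetic quadruples, which is all of them, and this a posteriori gives the reverse implication as well. The Aronhold claim now comes for free: a triple $\{\ell_i,\ell_j,\ell_k\}$ from $\{\ell_1,\ldots,\ell_7\}$ carries labels $\{0i,0j,0k\}$, and no syzygetic quadruple (a $4$-cycle or a perfect matching on $\{0,\ldots,7\}$) can contain three edges through the vertex $0$. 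So you can drop the appeal to the ``classical dictionary'' entirely.

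One minor correction: $\Sym(7)$ permuting the $p_i$ does not act on $X$ by automorphisms unless the configuration has extra symmetry; it acts on the marking. What actually makes your argument go through is that $\Sym(7)$ acts on the lattice $\Pic(X)$ preserving the intersection form and $\kappa_X$, so your Picard identities transport along each orbit.
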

\begin{proof}
The deeper reason is that the configuration of seven points in $\PP^2$ has the same moduli as seven points in $\PP^3$ by \emph{association} of point sets \cite{Coble1922}. The sextic model $C'$ actually arises as the projection from a linear system $|\theta_\text{even}+\kappa_C|$ (see \cite{GrossHarris2004}), so the labelling is indeed directly linked to the choice of an even theta characteristic on $C$. However, it is also sufficient to just verify the statement for a particular case and then argue via connectedness of the moduli space.
\end{proof}

The construction above provides a very explicit description of the moduli space of non-hyperelliptic genus three curves with full $2$-level structure. For explicitly parametrizing it, we lose no generality by setting $p_1,p_2,p_3,p_4$ to be the standard simplex and choosing $p_5,p_6,p_7=(u_1:v_1:1),(u_2:v_2:1),(u_3:v_3:1)$. General position means the $3\times 3$, respectively $6\times 6$ minors of
\[\begin{pmatrix}
1&0&0&1&u_1&u_2&u_3\\
0&1&0&1&v_1&v_2&v_3\\
0&0&1&1&1&1&1
\end{pmatrix}\text{ and }\begin{pmatrix}
1&0&0&1&u_1^2&u_2^2&u_3^2\\
0&1&0&1&v_1^2&v_2^2&v_3^2\\
0&0&1&1&1&1&1\\
0&0&0&1&u_1v_1&u_2v_2&u_3v_3\\
0&0&0&1&u_1&u_2&u_3\\
0&0&0&1&v_1&v_2&v_3
\end{pmatrix},\]
do not vanish.

\begin{proof}[Proof of Proposition~\ref{P:small_primes}]
With the description given above, it is a finite amount of work to check all the possibilities for $p=3,5,7,11$. For $p=3,5,7$ there are no $7$ points over $\FF_p$ in general position (see also \cite{BanwaitFiteLoughran2019}*{Proposition~4.4}). For $\FF_9$ there are $40$ triples $\{(u_1:v_1:1),(u_2:v_2:1),(u_3:v_3:1)\}$ that complement the standard simplex to $7$ points in general position. The construction \eqref{E:plane_sextic_model} requires lifting to characteristic $0$, but the rest of the construction remains valid. We find all resulting curves are isomorphic to $C_9$.
For $\FF_{11}$ there are $1440$ triples, all giving curves isomorphic to $C_{11}$.
\end{proof}
\section{Two-covers of smooth plane quartics with rational bitangents}
\label{S:twocover}

Let $C\colon f(x,y,z)=0$ be a smooth plane quartic with an Aronhold set $\ell_1,\ldots,\ell_7$. We adopt the notation of Proposition~\ref{P:aronhold_basis}. For $\gamma=(\gamma_1,\ldots,\gamma_7)\in (k^\times)^7$
we define the following curve in weighted projective space $\PP[2^3,1^{28}]$ with coordinates $x,y,z$ of weight $2$ and $w_1,\ldots,w_7,w_{12},\ldots,w_{67}$ of weight $1$.
\[
D_\gamma'\colon
\left\{
\begin{aligned}
f(x,y,z)&=0,\\
\ell_i(x,y,z)&=\gamma_i w_i^2&\text{ for }i=1,\ldots,7,\\
\ell_{ij}(x,y,z)&=\frac{\delta_{ij}}{\prod_{n\neq i,j}\gamma_n}w_{ij}^2
  & \text{ for }1\leq i<j\leq 7,\\
g_{ij}(x,y,z)&=w_{ij}\textstyle\prod_{n\neq i,j} w_n
  & \text{ for }0\leq i<j\leq 7.\\
\end{aligned}
\right.
\]
Thanks to the relations from Proposition~\ref{P:aronhold_basis} we have a well-defined projection $D'_\gamma\to C$. In fact, from the sign changes on $w_1,\ldots,w_7$ we see that $\Aut(D_\gamma'/C)=(\ZZ/2\ZZ)^7$. Furthermore, from the fact that the representation of the automorphism group on $w_{12},w_{23},\ldots,w_{67},w_{17}$ is faithful and for any fiber of $D'_\gamma\to C$ at most one of $w_i$ or $w_{ij}$ is zero, it follows the cover is unramified and that $D'_\gamma$ is not geometrically connected. Indeed the involution on $D'_\gamma$ that swaps the signs of all of $w_1,\ldots,w_7$ interchanges geometric components.
We consider the projection $\PP[2^3,1^{28}]\to\PP^{27}$ away from the weight $2$ part and consider the image $D_\gamma$ of $D'_\gamma$.

Lemma~\ref{L:lin_indep_bitangents} yields three linearly independent linear forms $\ell_i,\ell_j,\ell_n$, so that we can express $x,y,z$ as linear forms in $w_i^2,w_j^2,w_k^2$. Eliminating $x,y,z$ from the equations gives us $D_\gamma$ as an intersection of an octic equation, $25$ quadratic equations, and $28$ sextic equations. Alternatively we derive quartic relations from the syzygetic quadruples and their described relations (see Definition~\ref{D:syzquad}).

We introduce notation for a group naturally isomorphic to $(k^\times/k^{\times2})^6$, but presented in a way more natural for our purposes.

\begin{defn}
We define $L'(2,k)\simeq (k^\times/k^{\times2})^6$ by the exact sequence
\[1\to (k^\times/k^{\times2})\stackrel{\text{diagonal}}\longrightarrow (k^\times/k^{\times2})^7 \to L'(2,k)\to 1.\]
and we usually represent elements in $L'(2,k)$ by $(\gamma_1,\ldots,\gamma_7)\in (k^\times)^7$.
\end{defn}

\begin{prop} The two-covers of $C$ are exactly
\[\{\pi_\gamma\colon D_\gamma\to C, \text{ where } \gamma\in L'(2,k)\}.\]
\end{prop}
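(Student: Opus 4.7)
The plan is to identify both $L'(2,k)$ and the set of isomorphism classes of two-covers of $C$ with $\HH^1(k,\Gamma)$, and then check that the explicit construction $\gamma\mapsto D_\gamma$ realizes this identification. By hypothesis every bitangent is rational, so the basis $\{\theta_1-\theta_7,\ldots,\theta_6-\theta_7\}$ of $\Gamma=\Jac(C)[2]$ consists of $k$-rational classes and $\Gamma$ is the constant Galois module $(\ZZ/2\ZZ)^6$. Kummer theory then gives $\HH^1(k,\Gamma)\cong(k^\times/k^{\times2})^6$, which matches the structure of $L'(2,k)$ from its defining exact sequence. I would make the identification canonical by mapping the class of $(\gamma_1,\ldots,\gamma_7)\in L'(2,k)$ to the Kummer cocycle $\sigma\mapsto\bigl(\sigma(\sqrt{\gamma_i})/\sqrt{\gamma_i}\bigr)_{i=1}^{7}$, reduced modulo the diagonal.

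Next I would verify that each $D_\gamma$ is genuinely an unramified $(\ZZ/2\ZZ)^6$-cover of $C$, i.e.\ a two-cover. The preceding discussion has already sketched that $D'_\gamma\to C$ is étale of degree $2^7$, has geometric automorphism group $(\ZZ/2\ZZ)^7$, and is disconnected into two components interchanged by the diagonal sign change on $w_1,\ldots,w_7$. Étaleness ultimately rests on each bitangent form $\ell$ vanishing to order exactly $2$ at each of its two contact points with $C$ (its intersection with $C$ being $2\theta$ for an effective odd theta characteristic $\theta$), so $\ell=(\text{scalar})\cdot w^2$ defines a local étale double cover; the relations $g_{ij}=w_{ij}\prod_{n\ne i,j}w_n$ from Proposition~\ref{P:aronhold_basis} are what cut out one geometric component inside the naive fibre product of the $28$ individual double covers, and each such component is an étale $(\ZZ/2\ZZ)^6$-torsor over $C$.

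For the third step I would match the Galois descent data. After choosing $\sqrt{\gamma_i}\in\kbar^\times$, the fibre of $D_\gamma$ over a generic geometric point of $C$ is trivialized by the sections $w_i=\sqrt{\ell_i/\gamma_i}$, and for $\sigma\in\Gal(\kbar/k)$ the induced twist is exactly $\bigl(\sigma(\sqrt{\gamma_i})/\sqrt{\gamma_i}\bigr)_{i=1}^{7}\in\mu_2(\kbar)^7$. The simultaneous sign change on all seven factors merely interchanges the two components of $D'_\gamma$ and so does not affect the isomorphism class of the cover, so the relevant class lives in $\mu_2(\kbar)^7/\mathrm{diag}=\Gamma$. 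This is precisely the Kummer cocycle of $(\gamma_1,\ldots,\gamma_7)\in L'(2,k)$, so $\gamma\mapsto[D_\gamma]$ is the inverse of the Kummer isomorphism $L'(2,k)\xrightarrow{\sim}\HH^1(k,\Gamma)$, and is therefore a bijection onto the set of two-covers.

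The main obstacle I anticipate is the second step: checking rigorously that the scheme $D_\gamma\subset\PP^{27}$ really is étale over $C$ of the asserted degree and that the $25$ quadratic relations together with the $28$ sextic relations cut out the correct cover rather than a degeneration of it. This is combinatorially intricate and hinges on the syzygetic identities~\eqref{E:delta_equation} feeding into Proposition~\ref{P:aronhold_basis}; everything beyond that reduces to formal Kummer theory.
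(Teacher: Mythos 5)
Your overall plan is the same as the paper's: (i) recognize each $D_\gamma$ as a two-cover, (ii) identify $L'(2,k)$ with $\HH^1(k,\Pic^0(C)[2])$ via the Kummer sequence, and (iii) exhibit $D_\gamma$ as the twist of $D_1$ by the Kummer cocycle $\xi_\gamma(\sigma)=\bigl(\sigma(\sqrt{\gamma_i})/\sqrt{\gamma_i}\bigr)_i$ modulo the diagonal. Steps (ii) and (iii) in your write-up are essentially identical to the paper's. The place where you diverge is step (i), and that is also where you candidly flag a gap.

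For step (i) the paper does not try to argue directly with the full scheme $D_\gamma\subset\PP^{27}$ cut out by all the quadratic and sextic relations, precisely because, as you anticipate, that would be combinatorially unpleasant. Instead it projects onto the coordinates $(w_1:\cdots:w_7)$ to get a much smaller birational model $\tilde{D}_\gamma$ (an intersection of four quadrics and one octic), observes $\tilde\pi^*(\ell_i/\ell_7)=(\gamma_i/\gamma_7)(w_i/w_7)^2$, and concludes that the basis $\{[\theta_i-\theta_7]\}$ of $\Pic^0(C)[2]$ pulls back to principal divisors on $\tilde{D}_\gamma$. That is the defining property of a two-cover in the Bruin--Stoll sense, so one gets the result without ever invoking that a genus-three curve has a unique geometric $(\ZZ/2\ZZ)^6$-\'etale cover. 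Nonsingularity of $D_\gamma$ (needed to pass from the birational model to the actual $D_\gamma$) is then handled by Lemma~\ref{L:lin_indep_bitangents}: at any point of $D_\gamma(\kbar)$ one can choose an Aronhold set of bitangents avoiding the image on $C$, and the corresponding $w_i^2$ then recover $x,y,z$ linearly, so the projection away from $(x:y:z)$ is an isomorphism near every point.

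Your alternative route through \'etaleness and the torsor structure is fine in principle, but it implicitly uses the fact that, geometrically, a curve of genus $g$ has a \emph{unique} connected unramified $(\ZZ/2\ZZ)^{2g}$-cover (the maximal abelian exponent-$2$ cover), and this is exactly the two-cover; if you go this way you should state and justify that, since otherwise ``\'etale $(\ZZ/2\ZZ)^6$-torsor over $C$'' does not by itself pin down a two-cover. Also, your phrase ``vanishing to order exactly $2$ at each of its two contact points'' tacitly excludes hyperflexes; the \'etaleness conclusion still holds there (the local quadratic extension splits), but the phrasing should be loosened. Finally, you identify the well-definedness/nonsingularity of $D_\gamma$ as ``the main obstacle'' but do not resolve it; the Aronhold-set argument above (or equivalently working with the simpler $\tilde{D}_\gamma$) is the missing ingredient.
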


\begin{proof}
The projection of $D_\gamma$ onto the coordinates $(w_1:\cdots:w_7)$ gives a birational map to an intersection $\tilde{D}_\gamma$ of four quadrics and an octic hypersurface. Its singular locus is the pull-back along $\pi_\gamma$ of the contact locus of the bitangents $\ell_1,\ldots,\ell_7$. 
We see that $\tilde{\pi}\colon \tilde{D}_\gamma\to C$ is a finite rational cover of degree $2^6$ and that $\tilde{\pi}^*(\ell_i/\ell_7)=(\gamma_i/\gamma_7) (w_i/w_7)^2$. This shows that a basis for $\Pic^0(C)[2]$ pulls back to principal divisors, and hence that $\tilde{D}_\gamma$ is a birational model of a two-cover, and therefore so is $D_\gamma$. To see that $D_\gamma$ is nonsingular, we use that for $P\in D_\gamma(\kbar)$ we can find an Aronhold set of bitangents that do not meet $\pi_\gamma(P)$.

In order to show that all $2$-covers arise as $D_\gamma$, we observe that $\Pic(C/k)[2]=(\mu_2)^6$, where we write $\mu_2$ for the Galois module $\{-1,1\}$. By the Kummer sequence we have
\[\HH^1(k,\Pic(C/k)[2])=(k^\times / k^{\times2})^6\simeq L'(2,k).\]
For $\sigma\in \Gal(k^{\sep}/k)$ we define the cocycle
\[\xi_\gamma(\sigma)\colon (w_1:\ldots:w_7)\mapsto \left(\frac{\sqrt{\gamma_1}^\sigma}{\sqrt{\gamma_1}}w_1:\cdots:
\frac{\sqrt{\gamma_7}^\sigma}{\sqrt{\gamma_7}}w_7\right).\]
This gives an isomorphism $L'(2,k) \simeq \HH^1(k,\Aut(D_1,C))\simeq \HH^1(k,\Pic^0(C)[2])$,
and $D_\gamma$ is the twist of $D_1$ by the Galois cocycle $\xi_\gamma$.
\end{proof}
We define a partial map
\[\bgamma\colon C(k)\dashrightarrow L'(2,k);\;P\mapsto(\ell_1(P),\ldots,\ell_7(P))\]
and extend it to a full map by observing that by Definition~\ref{D:syzquad}, for any syzygetic quadruple $\fq=\{\ell_i,\ell_a,\ell_b,\ell_c\}$ we have that
\[\ell_i(P)\equiv \delta_\fq \ell_a(P)\ell_b(P)\ell_c(P) \pmod{\text{squares}}\]
whenever both sides are nonzero, so if $\ell_i(P)=0$, we assign the appropriate value by taking the right hand side for a suitable quadruple $\fq$. We obtain
\begin{prop}\label{P:gamma_map}
	The map $\bgamma\colon C(k)\to L'(2,k)$ assigns to $P\in C(k)$ the cover $D_{\bgamma(P)}$ for which there is a point $Q\in D_{\bgamma(P)}(k)$ such that $\pi_{\bgamma(P)}(Q)=P$.
\end{prop}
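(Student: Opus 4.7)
The strategy is, for each $P \in C(k)$, to exhibit an explicit point $Q \in D_{\bgamma(P)}(k)$ above $P$ by writing down each of its weighted projective coordinates and checking the defining equations of $D_\gamma$ directly. The argument splits into the generic case (where no bitangent from the Aronhold set $\ell_1,\dots,\ell_7$ vanishes at $P$) and the degenerate case where $P$ is a contact point of one of these bitangents.

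In the generic case, $\gamma_i = \ell_i(P) \in k^\times$ is a legitimate representative of $\bgamma(P)$. I would define $Q$ by taking $(x,y,z)$ to be the coordinates of $P$, $w_i = 1$ for each $i \in \{1,\dots,7\}$, and $w_{ij} = g_{ij}(x,y,z)$ for $1 \le i < j \le 7$. The first family of equations $\ell_i = \gamma_i w_i^2$ holds by the choice of $\gamma_i$, and the equations $g_{ij} = w_{ij}\prod_{n \neq i,j} w_n$ reduce to tautologies. The remaining quadratic equations $\ell_{ij}\prod_{n \neq i,j}\gamma_n = \delta_{ij} w_{ij}^2$ are precisely the identity obtained by specializing Proposition~\ref{P:aronhold_basis} to $P$. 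Everything is rational over $k$, so $Q \in D_\gamma(k)$.

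The degenerate case, where $\ell_i(P) = 0$ for some $i$, has two parts. First, the extension of $\bgamma$ via a syzygetic quadruple $\fq = \{\ell_i,\ell_a,\ell_b,\ell_c\}$, setting $\gamma_i = \delta_\fq\, \ell_a(P)\ell_b(P)\ell_c(P)$, must be shown independent of the choice of $\fq$; this follows by comparing two such $\fq$ using Definition~\ref{D:syzquad} evaluated at $P$. Second, $Q$ is constructed with $w_i = 0$, $w_n = 1$ for $n \neq i$, $w_{ij} = g_{ij}(P)$ for pairs containing $i$ (the calculation here is parallel to the generic case since $\gamma_i$ does not appear in the product $\prod_{n \neq i,j} \gamma_n$), and each remaining $w_{jk}$ (with $j,k \neq i$) obtained from the corresponding quadratic equation after substituting the syzygetic expression for $\gamma_i$.

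The main obstacle is the last step: one must verify that the resulting $w_{jk}^2 = \ell_{jk}(P)\,\delta_\fq\,\ell_a(P)\ell_b(P)\ell_c(P)\prod_{n \neq i,j,k}\ell_n(P)/\delta_{jk}$ is a square in $k$. I would do this by combining Definition~\ref{D:syzquad} for $\fq$ with Proposition~\ref{P:aronhold_basis} applied to $\ell_{jk}$, tracking the identity modulo $f$ and then evaluating at $P$ (where both $\ell_i(P)=0$ and $f(P)=0$). A cleaner alternative that sidesteps this computation is to change to a second Aronhold set $\{\ell'_1,\dots,\ell'_7\}$ none of whose members vanishes at $P$; such a set exists by Lemma~\ref{L:lin_indep_bitangents} together with a counting argument on the $288$ Aronhold sets, and the degenerate case then reduces to the generic case under the intrinsic identification of $\bgamma(P)$ in $L'(2,k)$.
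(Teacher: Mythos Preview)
The paper does not give a separate proof of this proposition; it is stated as an immediate consequence (``We obtain'') of the preceding explicit construction of $D_\gamma$ and of the definition of $\bgamma$. Your explicit verification therefore supplies more detail than the paper itself, and your generic case is precisely the computation the construction is set up to make transparent: with $\gamma_i=\ell_i(P)$ and $w_i=1$, the three families of defining equations of $D'_\gamma$ specialize to tautologies and to Proposition~\ref{P:aronhold_basis} evaluated at $P$.

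Your handling of the degenerate case is correct in outline, but both proposed approaches skate over a small point. In Approach~1 (and already in your well-definedness check for the syzygetic extension), the identity you obtain in $k(C)$ has the shape $(\text{nonzero value at }P)=h^2$ with $h=g_{jk}\cdot(\text{units at }P)/Q_\fq$, and both $g_{jk}$ and $Q_\fq$ vanish at $P$: since $P$ is a contact point of $\ell_i$, it lies on the conic $Q_\fq$ from Definition~\ref{D:syzquad}. So ``evaluating at $P$'' does not literally work; you need the observation that in the discrete valuation ring $\calO_{C,P}$ a function whose square is a unit is itself a unit, whence $h$ is regular with $h(P)\in k^\times$ and $h(P)^2$ equals your expression. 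In Approach~2, the appeal to Lemma~\ref{L:lin_indep_bitangents} and a counting argument is unnecessary---a point of $C$ lies on at most one bitangent, because two lines tangent to $C$ at the same smooth point coincide---so any Aronhold set avoiding that single bitangent will do. More substantively, ``the intrinsic identification of $\bgamma(P)$ in $L'(2,k)$'' hides the step of showing that the models $D_\gamma$ built from the two Aronhold sets are $k$-isomorphic over $C$ for corresponding $\gamma$; this is exactly what the cohomological description in the proposition immediately preceding this one provides, and you should cite it rather than leave the identification implicit.
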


\section{Selmer sets}
\label{S:selmersets}
We restrict to the case where $k$ is a number field, but our method applies to any global field of characteristic different from $2$. We write $\calO$ for its ring of integers, $\Omega$ for the set of places of $k$, and $k_v$ for the completion of $k$ at $v\in\Omega$. For non-archimedean $v$ we write $\calO_v\subset k_v$ for its ring of integers, $\fp_v$ for its maximal ideal, and $\calO_v/\fp_v$ for its residue field.

The map $\bgamma$ from Proposition~\ref{P:gamma_map} and its local variant $\bgamma_v$ fit in the commutative diagram
\[\begin{tikzcd}
C(k)\arrow[r,"\bgamma"]\arrow[d, hook]&L'(2,k)\ar[d,"\rho_v"]\\
C(k_v)\arrow[r,"\bgamma_v"]&L'(2,k_v)\nospaceperiod
\end{tikzcd}
\]
We define
\[\Sel^{(2)}(C/k)=\{\gamma\in L'(2,k): \rho_v(\gamma)\in \bgamma_v(C(k_v))\text{ for all }v\in\Omega_k\}.\]
Clearly we have $\bgamma(C(k))\subset \Sel^{(2)}(C/k)$ and in particular, if $\Sel^{(2)}(C/k)=\emptyset$ then $C(k)=\emptyset$.

Let us now fix an integral model $C\colon f(x,y,z)=0$ with $f\in \calO[x,y,z]$, as well as $28$ bitangent forms $\ell_{ij}\in\calO[x,y,z]$.
The \emph{discriminant} $D_{27}(f)$ of a quartic (see \cite{Gelfand-Kapranov-Zelevinsky2008}*{Chapter~13, Proposition~1.7}) is an integer form of degree $27$ in the coefficients of $f$ that vanishes precisely when $f$ describes a singular curve. Thus, if we take
\[S=\{v\in \Omega_k: \ord_v(2D_{27}(f))>0, \text{ or } \ell_{ij}\in\fp_v[x,y,z], \text{ or $v$ is archimedean}\}\]
then $C$ has good reduction at all $v$ not in $S$, meaning that the coefficient-wise reductions of $f$ and $\ell_{ij}$ describe a nonsingular plane quartic and its bitangents over $\calO_v/\fp_v$.
We consider the \emph{unramified part}
\[L'(2,k_v)^{\unr}=\{\gamma\in L'(2,k_v): \ord_v(\gamma_i)\equiv \ord_v(\gamma_j) \pmod{2}\text{ for all }i,j\}.\]

\begin{prop}\label{P:unramified_image}
If $C/k_v$ has good reduction as a plane quartic and the residue characteristic of $k_v$ is odd, then $\bgamma_v(C(k_v))\subset L'(2,k_v)^{\unr}$. If furthermore $\#\calO_v/\fp_v\geq 66562$ then
$\bgamma_v(C(k_v))=L'(2,k_v)^{\unr}$.
\end{prop}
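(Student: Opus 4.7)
I would prove the two assertions separately, using a local integrality argument for the inclusion and the Weil bound together with Hensel's lemma for the surjectivity.

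For the inclusion, fix $P\in C(k_v)$ and take primitive integral homogeneous coordinates, so each $\ell_i(P)\in\calO_v$; it suffices to show that $\ord_v(\ell_i(P))$ is even for every $i$, since Lemma~\ref{L:lin_indep_bitangents} guarantees at least one of these valuations is already zero. The case $\bar\ell_i(\bar P)\neq0$ is immediate, so assume $\bar P$ is a contact point of $\bar\ell_i$. On the smooth integral model $C_{\calO_v}$, the section $\ell_i|_C$ of $\calO_C(1)$ has no vertical component in its divisor (its reduction $\bar\ell_i|_{\bar C}$ is nonzero, since $\bar f$ is an irreducible quartic), so the divisor equals the scheme-theoretic closure of $2\theta_i$, namely $2E_i$ where $E_i$ is the closure of the contact divisor $\theta_i$. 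Regularity of $C_{\calO_v}$ makes $E_i$ Cartier, so on a Zariski neighborhood of $\bar P$ we have $\ell_i|_C=u\cdot t^2$ with $u\in\calO^\times$ and $t$ a local equation for $E_i$; evaluation at $P$ yields $\ord_v(\ell_i(P))=2\ord_v(t(P))$.

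For the surjectivity, given $\gamma\in L'(2,k_v)^{\unr}$, adjust the representative by the diagonal $k_v^\times\hookrightarrow(k_v^\times)^7$ and by squares so that every $\gamma_i\in\calO_v^\times$. The equations of Section~\ref{S:twocover} then define an integral model of $D_\gamma$ whose reduction $\bar D_\gamma$ is a smooth, geometrically connected curve over $\FF_q=\calO_v/\fp_v$: at a contact point of $\bar\ell_i$, the factorization $\bar\ell_i=\bar u t^2$ from the first part turns $\bar\ell_i=\gamma_i w_i^2$ into two unramified branches (using $\Char\FF_q\neq2$), so $\bar D_\gamma\to\bar C$ is étale of degree $2^6$, and geometric connectedness is inherited from $\bar D_1$. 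Riemann--Hurwitz gives $g(\bar D_\gamma)=1+64\cdot 2=129$, so the Weil bound
\[\#\bar D_\gamma(\FF_q)\geq q+1-258\sqrt{q}\]
is positive (hence at least $1$) for $q\geq 66562$. Any such $\FF_q$-point lifts by Hensel's lemma to $Q\in D_\gamma(k_v)$, and the defining relations $\ell_i(\pi_\gamma(Q))=\gamma_i w_i(Q)^2$ yield $\bgamma_v(\pi_\gamma(Q))=\gamma$.

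The main technical point I expect to verify carefully is that $\bar D_\gamma$ really is smooth, i.e., that $D_\gamma\to C$ extends to an étale cover of integral models; this rests precisely on the square structure of $\ell_i$ at contact points established above, together with the unit choice of $\gamma_i$. A fully elementary alternative to the Picard-scheme step in the first part is provided by Proposition~\ref{P:aronhold_basis}: reducing the relation $(\prod_{n\neq i,j}\ell_n)\ell_{ij}\equiv\delta_{ij}g_{ij}^2\pmod{f}$ modulo $\fp_v$ forces each $\delta_{ij}$ to be a unit (otherwise $\bar f$ would divide a product of linear forms), and evaluating at $P$ then directly constrains the parities of $\ord_v(\ell_i(P))$.
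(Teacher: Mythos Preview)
Your argument is correct, and the surjectivity half (choose unit representatives for $\gamma$, apply the Hasse--Weil bound to the genus-$129$ reduction $\overline{D}_\gamma$, Hensel lift) is exactly the paper's route.

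For the inclusion you take a genuinely different path. The paper argues purely via the syzygetic relation of Definition~\ref{D:syzquad}: distinct bitangents have distinct contact points on the reduction, so at most one $\ell_i$ vanishes at $\overline{P}$; picking any syzygetic quadruple $\fq=\{\ell_i,\ell_a,\ell_b,\ell_c\}$, good reduction forces $\ord_v(\delta_\fq)=0$, and evaluating $\ell_i\ell_a\ell_b\ell_c=\delta_\fq Q_\fq^2+c_\fq f$ at $P$ shows $\ord_v(\ell_i(P))$ is even. Your main argument instead works on the smooth model $C_{\calO_v}$: since $\div(\ell_i)$ is horizontal and its generic fibre is $2\theta_i$, it equals $2E_i$ with $E_i$ Cartier, giving $\ell_i=u\,t^2$ locally. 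The paper's approach is shorter and entirely elementary; yours is more structural and has the bonus of supplying exactly the local square factorisation you then reuse to check that $\overline{D}_\gamma\to\overline{C}$ is \'etale, a point the paper simply asserts. Your proposed elementary alternative via Proposition~\ref{P:aronhold_basis} is in the same spirit as the paper, but note the paper uses the degree-four relation of Definition~\ref{D:syzquad} rather than the degree-six one: with a quadruple, only three auxiliary bitangent values need to be units at $\overline{P}$, which is immediate from distinctness of contact points.
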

\begin{proof} Let $\overline{C}$ be the reduction of $C$.
Any point $P\in C(k_v)$ reduces to a point $\overline{P}\in \overline{C}(\calO_v/\fp_v)$. Since the bitangents do not share contact points, we have $\ord_v(\ell_i(P))>0$ for at most one $i$. Let $\fq=\{\ell_i,\ell_a,\ell_b,\ell_c\}$ be a syzygetic quadruple. 
The good reduction properties imply that $\ord_v(\delta_\fq)=0$, in the notation of Definition~\ref{D:syzquad}. We see that $\ell_i(P)\ell_a(P)\ell_b(P)\ell_c(P)$ must have even valuation, but that implies that $\ord_v(\ell_i(P))$ is even.

For the second part, we observe that for $\gamma\in L'(2,k_v)^{\unr}$, the curve $D_\gamma$ has good reduction as well. This curve has genus $129$ and, writing $q=\#\calO_v/\fp_v$, the Hasse-Weil bounds give
\[\#\overline{D}_\gamma(\calO_v/\fp_v)\geq q+1-2\cdot 129\sqrt{q},\]
so if $q\geq 66562$, then there is a (necessarily smooth) point on $\overline{D}_\gamma$, so Hensel lifting gives a point in $D_\gamma(k_v)$. The image of that point on $C$ maps to $\gamma$.
\end{proof}

We define
\[L'(2,k; S)=\{\gamma\in L'(2,k): \rho_v(\gamma)\in L'(2,k_v)^{\unr}\text{ for all }v\in \Omega_k\setminus S\}.\]
Let $\calO_S$ be the ring obtained by inverting the primes of the finite places in $S$. If $\calO_S$ has odd ideal class number then $L'(2,k;S)$ is generated by $(\calO_S^\times/\calO_S^{\times 2})^7$, so it is a finite group. Note that by enlarging $S$, we can ensure that $\calO_S$ has odd class number.

It follows from Proposition~\ref{P:unramified_image} that $\Sel^{(2)}(C/k)\subset L'(2,k;S)$. Furthermore, if we set
\[T=S\cup\{v\in \Omega_k: \#\calO_v/\fp_v< 66562\},\]
then we obtain
\begin{equation}\label{E:selset_finite_intersection}
\Sel^{(2)}(C/k)=\{\gamma\in L'(2,k; S): \rho_v(\gamma)\in \bgamma_v(C(k_v)) \text{ for } v\in T\}.
\end{equation}
Hence, if we can compute generators for $\calO_S^\times$, which is a standard task in algebraic number theory, and compute $\bgamma_v(C(k_v))$ for finite and real $v$, then we can compute the Selmer set.

\subsection{Computing the local image for archimedean places}
\label{S:local_image_infinite}
For $k_v=\CC$ we have that $\CC^\times=\CC^{\times2}$ and $C(\CC)\neq \emptyset$, so there is nothing to compute: the local image is the whole (trivial) group $L'(2,\CC)$.

For $k=\RR$ we have that $\RR^\times/\RR^{\times 2}$ is represented by $\{\pm 1\}$. Furthermore, a smooth plane quartic $C/\RR$ with all bitangents defined over $\RR$ has four components \cite{GrossHarris1981}*{Proposition~5.1}, and the map $\bgamma\colon C(\RR)\to L'(2,\RR)\simeq \FF_2^6$ is continuous and therefore constant on components. In order to find $\bgamma(C(\RR))$ we only need to find points on each component and evaluate $\bgamma$ there. Each pair of components has four bitangents touching each, so these contact points must be real. The remaining four bitangents might have complex conjugate contact points.
Each pair of components is separated by a bitangent, so $\bgamma$ actually takes different values on the components: we know that $\#\bgamma(C(\RR))=4$.

Since we need to compute the bitangents anyway, we can use the real contact points to evaluate $\bgamma$. Once we have found four different images, we know we have determined the entire image. 

\subsection{Computing the local image for finite places}
\label{S:local_image_finite}
In this section, we take $k$ to be a local field with ring of integers $\calO$, uniformizer $\pi$ with $\fp=\pi\calO$, and a set $D$ of representatives of $\calO/\fp$.

We have $k^\times\simeq \ZZ\oplus \calO^\times$. The map $\mu\colon k^\times\to k^\times/k^{\times2}\simeq (\ZZ/2\ZZ)\oplus (\calO^\times/\calO^{\times2})$ is constant on sets of the form $x_0+\fp^{\ord(4)+1}$, with $x_0\in\calO^\times$, as can easily be checked from the fact that Newton iteration for finding the roots of $y^2-x_0$ amounts to iterating the map $y\mapsto \frac{1}{2}(y+\frac{x}{y})$, which converges for $y\in 1+2\fp$ if $\ord((x_0-1)/4)>1$.

We assume we have $f,\ell_{ij}\in \calO[x,y,z]$ representing a quartic curve $C\colon f(x,y,z)=0$ and its bitangents. Furthermore, we assume we have the $\delta_\fq$ from Definition~\ref{D:syzquad} for all syzygetic quadruples $\fq$, or at least the $210$ that involve $\ell_1,\ldots,\ell_7$.

Note that any $P\in C(k)$ admits a representative of one of the forms $(x_0:y_0:1),(x_0:1:\pi y_0),(1:\pi x_0:\pi y_0)$, with $x_0,y_0\in \calO$, so it is sufficient to restrict ourselves to $\calO$-valued points on affine plane quartics.

We say a set of the form $\calB=(x_0+\fp^e)\times (y_0+\fp^e)$ is a \emph{Hensel-liftable ball} for $f(x,y)=0$ if $0\in f(\calB)$ and $(0,0)\notin \nabla_{xy} f(\calB)$, with $\nabla_{xy}$ denoting the gradient. In that case, applying Newton iteration to any point in $\calB$ converges to an $\calO$-valued point of $f(x,y)=0$. It is a standard result that the $\calO$-valued points on a nonsingular curve can be covered with finitely many Hensel-liftable balls (see Algorithm~\ref{A:HenselBalls} in the Appendix).

In addition, we require that $\bgamma$ is constant on $\calB\cap C(k)$.
For this we use that the component $\bgamma_i(P)$ can be computed via either $\mu(\ell_i(P))$ or, for a syzygetic quadruple $\fq=\{\ell_i,\ell_a,\ell_b,\ell_c\}$, by
$\mu(\delta_\fq \ell_a(P)\ell_b(P)\ell_c(P))$.
Since bitangents do not share contact points, we see that for sufficiently small balls, at least one of the descriptions will be constant. We can then evaluate the map at a single representative. We start with a covering of Hensel-liftable balls and refine it as required. With Algorithm~\ref{A:LocalImage} (see Appendix) we find
\[
\begin{split}
\bgamma(C(k))=&\,\textsc{LocalImage}(f(x,y,1))\cup\textsc{LocalImage}(f(x,1,\pi y))\\
&\cup\, \textsc{LocalImage}(f(1,\pi x,\pi y)).
\end{split}
\]

\begin{remark}
The additional condition that $\bgamma$ be constant on our Hensel-liftable balls $\calB$ is surprisingly easily satisfied. In experiments with $\calO=\ZZ_p$, including for $p=2$, we find that refinement is only rarely required. 

This happens because there are many syzygetic quadruples: each $\ell_i$ is involved in $45$. Hence, if $P$ lies close to a zero of $\ell_i$, then there is likely a quadruple $\fq$ such that $P$ lies far away from the contact points of the other three bitangents.

This is in stark contrast with the hyperelliptic case, where the role of the bitangent contact points is played by the Weierstrass points. They are fewer in number, but there are also fewer relations between them, necessitating higher lifting.
\end{remark}

\subsection{Overcoming combinatorial explosion}
If $k$ is a number field, then we can compute $L'(2,k;S)$ and the algorithms from Sections~\ref{S:local_image_infinite} and \ref{S:local_image_finite} allow us to compute the local images, so using \eqref{E:selset_finite_intersection} we can compute $\Sel^{(2)}(C/k)$. However, as an $\FF_2$-vector space, we have $\dim_2 L'(2,k;S)=6(\#S)$, and $S$ tends to have considerable size. For instance, if $k=\QQ$ and $C$ has points everywhere locally, then Proposition~\ref{P:small_primes} yields that $\{2,3,5,7,11,\infty\}\subset S$, so $\#L'(\QQ,2;S)\geq 2^{36}$. Consequently, the point-wise iteration over $L'(k,2;S)$ that  \eqref{E:selset_finite_intersection} suggests, is usually practically infeasible. We use some linear algebra first.

We extend $\bgamma$ linearly to divisors, while also keeping track of the parity of the degree,
\[\btgamma\colon\Div(C)\to \FF_2\times L'(2,k);\quad \btgamma\Big(\sum n_P P\Big)=\Big(\sum n_P,\prod \bgamma(P)^{n_P}\Big),\]
(see \cite{BPS:quartic}*{\S6}). One finds that principal divisors lie in the kernel, so $\btgamma$ descends to a map on $\Pic(C/k)$.
We write $W_v=\langle \btgamma(C(k_v))\rangle$ for the $\FF_2$-span. We write $W_v^0$ for the kernel of the projection $W_v\to\FF_2$ on the first coordinate, and $W_v^1$ for its complement.

Given explicit representations for $L'(2,k;S)$ and $L'(2,k_v)$ as $\FF_2$-vector spaces, it is straightforward to find a description of $\tilde{\rho}_v\colon \FF_2\times L'(2,k;S)\to \FF_2\times L'(2,k_v)$ as a linear transformation. We immediately obtain
\begin{equation}\label{E:selset_linalgbound}
\Sel^{(2)}(C/k)\subset W_{C}^1:=\bigcap_{v\in S} \tilde{\rho}_v^{-1}(W_v^1),
\end{equation}
where the intersection on the right hand side is easily computed as an affine subset using standard linear algebra tools, even if $\#S\sim 100$.

On $\Pic^0(C/k_v)$ the kernel of $\btgamma_v$ is exactly $2\Pic^0(C/k_v)$. Furthermore, with the presence of a point $P_0\in C(k_v)$ we have that $\Pic^0(C/k_v)=\Jac_C(k_v)$, and since the latter is a compact $k_v$-Lie group we have
\begin{equation}\label{E:jac_dimbound}
\#( \Jac_C(k_v)/2\Jac_C(k_v)) = (\#\Jac_C[2](k_v))/|2|_v^3,
\end{equation}
where we normalize
\[
|2|_v=\begin{cases}
2&\text{if $v$ is a real place},\\
4&\text{if $v$ is a complex place},\\
(\#\calO_v/\fp_v)^{-\ord_v(2)}&\text{if $v$ is a finite place}.
\end{cases}
\]
\begin{lemma}\label{L:local_image_generated}
Suppose that $C$ is defined over a completion $\QQ_v$ of\, $\QQ$. If $\{P_0,\ldots,P_r\}\subset C(\QQ_v)$ are such that
\[\dim_2\langle \bgamma_v(P_i)-\bgamma_v(P_0): i = 1,\ldots,r\rangle =
\begin{cases}
3&\text{if }\QQ_v=\RR,\\
9&\text{if }\QQ_v=\QQ_2,\\
6&\text{otherwise},
\end{cases}\]
then $\btgamma_v(\Pic^0(C/\QQ_v))=W_v^0$ and $W_v=\langle \btgamma(P_0),\ldots,\btgamma(P_r)\rangle$.
\end{lemma}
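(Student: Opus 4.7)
The plan is to combine Kummer-theoretic injectivity of $\btgamma_v$ on $\Pic^0(C/\QQ_v)$ with the dimension count from \eqref{E:jac_dimbound}, and then upgrade containments into equalities.

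First I would show that the restriction of $\btgamma_v$ to $\Pic^0(C/\QQ_v)$ factors through $\Jac_C(\QQ_v)/2\Jac_C(\QQ_v)$ and agrees there with the Kummer connecting map
\[\delta\colon \Jac_C(\QQ_v)/2\Jac_C(\QQ_v)\hookrightarrow \HH^1(\QQ_v,\Jac_C[2]).\]
Since all $28$ bitangents are $\QQ_v$-rational, $\Jac_C[2]\simeq (\mu_2)^6$ as a Galois module, and $\HH^1(\QQ_v,\Jac_C[2])\simeq L'(2,\QQ_v)$ under the identification already used to exhibit $D_\gamma$ as the twist of $D_1$ by the cocycle $\xi_\gamma$ in Section~\ref{S:twocover}. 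In particular $\btgamma_v|_{\Pic^0}$ is injective modulo $2\Pic^0$, and by \eqref{E:jac_dimbound} its image has $\FF_2$-dimension $6-3\log_2|2|_v$, which evaluates to $3$, $9$, $6$ in the three cases of the hypothesis.

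Next, each generator $\bgamma_v(P_i)-\bgamma_v(P_0)$ equals $\btgamma_v(P_i-P_0)$ and therefore lies in $\btgamma_v(\Pic^0(C/\QQ_v))$; write $V$ for their $\FF_2$-span. Conversely, any element of $W_v^0$ is an even-length $\FF_2$-sum $\sum_{j\in J}\btgamma_v(Q_j)$ with $Q_j\in C(\QQ_v)$; fixing some $j_0\in J$ and using that $|J|$ is even rewrites this as $\sum_j\btgamma_v(Q_j-Q_{j_0})$, exhibiting $W_v^0\subset\btgamma_v(\Pic^0(C/\QQ_v))$. Thus $V\subset W_v^0\subset\btgamma_v(\Pic^0(C/\QQ_v))$, and the assumed value of $\dim_2 V$ matches the dimension of the outer space by the count above, forcing all three to coincide. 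For the second assertion, the point $P_0\in C(\QQ_v)$ makes the degree projection $W_v\to\FF_2$ surjective with kernel $W_v^0$, so $W_v$ is generated by $W_v^0$ together with $\btgamma_v(P_0)$; replacing each generator $\bgamma_v(P_i)-\bgamma_v(P_0)$ of $W_v^0$ by its sum with $\btgamma_v(P_0)$ produces precisely $\btgamma_v(P_0),\ldots,\btgamma_v(P_r)$.

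The main non-cosmetic step is the verification in the first paragraph that the combinatorial square-class map $\bgamma_v$ coincides with the Kummer connecting map under the chosen identification $\HH^1(\QQ_v,\Jac_C[2])\simeq L'(2,\QQ_v)$; this amounts to the same bookkeeping already present in the twist-by-$\xi_\gamma$ construction, but signs and labelling conventions must be tracked consistently to ensure the two maps agree on the nose rather than merely up to an automorphism of $L'(2,\QQ_v)$.
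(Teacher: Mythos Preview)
Your argument is correct and follows the same route as the paper: the paper records just before the lemma that the kernel of $\btgamma_v$ on $\Pic^0(C/\QQ_v)$ is exactly $2\Pic^0(C/\QQ_v)$, invokes \eqref{E:jac_dimbound} with $\#\Jac_C[2](\QQ_v)=64$ for the dimension, and then concludes via $W_v=W_v^0+\btgamma_v(P_0)$; you have simply made the sandwich $V\subset W_v^0\subset\btgamma_v(\Pic^0)$ explicit. Your closing caveat is stronger than necessary: the lemma only needs that $\btgamma_v$ is injective on $\Pic^0/2\Pic^0$, not that it matches the Kummer map on the nose, so an identification up to an automorphism of $L'(2,\QQ_v)$ would already suffice.
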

\begin{proof}
We have $\#\Jac_C[2](\QQ_v)=64$, so the dimension bound is just \eqref{E:jac_dimbound}. Thus the condition is that the divisor classes $[P_1-P_0],\ldots,[P_r-P_0]$ generate $\Pic^0(C/\QQ_v)/2\Pic^0(C/\QQ_v)$.
The second statement follows simply from $W_v=W_v^0+\btgamma(P_0)$.
\end{proof}
This lemma provides us in many cases with a way to compute $W_v$ directly and quickly. An alternative is to determine $\btgamma_v(C(k_v))$ using the algorithm sketched in Section~\ref{S:local_image_finite}. This has a complexity proportional to the size of the residue field $\calO_v/\fp_v$, which is rather bad.

In many cases the $k_v$-valued contact points of the bitangents are already sufficient to generate $W_v$. In fact for real places this is always the case by the argument in Section~\ref{S:local_image_infinite}.

It may be the case that $\Pic^0(C/k_v)/2\Pic^0(C/k_v)$ really does need divisors with higher degree places in their support. In that case, if the residue field is small enough, we can compute $W_v$ via Section~\ref{S:local_image_finite} or we can search for these higher degree places and use $\langle\btgamma_v(P_0)\rangle+\btgamma_v(\Pic^0(C/k_v))$ as an upper bound for $W_v$ in \eqref{E:selset_linalgbound}.

\begin{remark}
	If Lemma~\ref{L:local_image_generated} applies to all $v\in S$ then we compute the $2$-Selmer group of $\Jac_C$ as well, via
	\[\Sel^{(2)}(\Jac_C/\QQ)=\bigcap_{v\in S} \tilde{\rho}_v(W_v^0),\]
	and in any case the right hand side gives a subgroup of the Selmer group, so we get a lower bound in all cases. See Section~\ref{S:rank_sha}.
\end{remark}

\subsection{Information at good primes}

Let $k_v$ be a local field of odd residue characteristic, with $q=\#(\calO_v/\fp_v)$. Then
\[\#L'(2,k_v)^{\unr}=64.\]
If $C/k_v$ has good reduction $\overline{C}$, then $\bgamma_v(P)$ is already determined by the reduction of $P$, so using the Hasse-Weil bounds, we obtain
\[\#\bgamma_v(C(k_v))\leq \#\overline{C}(\calO_v/\fp_v)\leq q+1+6\sqrt{q}.\]
If $q\leq 29$ then $\bgamma_v(C(k_v))\subsetneq L'(2,k_v)^{\unr}$, and even if $q$ is larger, it is quite likely that the local image is not the entire unramified set. Hence, for small residue class field, many of the two-covers $D_\gamma$ fail to have points locally, even at primes of good reduction. We see that in the intersection \eqref{E:selset_finite_intersection}, the primes of small norm actually impose significant conditions.

Because computing local images for primes of larger norm is expensive, we define a more easily computed set that contains $\Sel^{(2)}(C/k)$, by
\[
\begin{split}
\Sel^{(2)}(C/k;N)=\{\gamma\in L'(2,k;S): (1,\gamma)\in W_C^1 \text{ for }v\in S\text{ and }\hspace{3em}\\\rho_v(\gamma)\in \bgamma_v(C(k_v)) \text{ for $v$ such that }\#(\calO_v/\fp_v)\leq N\}.
\end{split}\]
We compute this set using Algorithm~\ref{A:TwoCoverDescent}. If the resulting set is empty, then $C(k)$ is empty.

\begin{algorithm}[t]
\caption{TwoCoverDescent}\label{A:TwoCoverDescent}
\Input{Quartic $f\in \calO[x,y,z]$ describing a nonsingular plane quartic $C$ with bitangent forms $\{\ell_{ij}\in\calO[x,y,z]: 0\leq i<j\leq 7\}$ and the $\delta_\fq$ according to Definition~\ref{D:syzquad}, and a norm bound $N$.}
\Output{$\Sel^{(2)}(C/k;N)$}
$S\gets \{v\in \Omega_k: \ord_v(2D_{27}(f))>0, \text{ or } \ell_{ij}\in\fp_v[x,y,z], \text{ or $v$ is archimedean}\}$\\
$W\gets \FF_2\times L'(2,k;S)$\label{line:W}\\
\For{$v\in S$}{
	$\calP\gets \{\btgamma_v(P)\in C(k_v): \ell_{ij}(P)=0 \text{ for some }i,j\}$\\
	\If{\upshape$\dim_2\langle P-Q:P,Q\in \calP\rangle$ equals the bound in Lemma~\ref{L:local_image_generated}}{
		$W_v\gets \langle \calP\rangle$
	}
	\Else{
		$W_v\gets \langle\btgamma_v(C(k_v))\rangle$\text{ as computed in Sections~\ref{S:local_image_infinite} and~\ref{S:local_image_finite}.}
	}
	$W\gets W\cap\rho_v^{-1}(W_v)$\\
}
$W^1\gets \{w\in W: w_1=1\}$, where $w_1$ is the image of $w$ in $\FF_2$ from Line~\ref{line:W}\\
\For{\upshape$v\in \Omega_k: v\text{ is finite and }\#(\calO_v/\fp_v)\leq N$}{
	$W^1\gets \{w\in W^1\colon \tilde{\rho}_v(w)\in \btgamma_v(C(k_v))\}$.
}
\Return{W}
\end{algorithm}

\section{Results}
\label{S:results}
We implemented Algorithm~\ref{A:TwoCoverDescent} for $k=\QQ$ in Magma~\cite{magma} and tested it on two sample sets:
\begin{itemize}
	\item[\textbf{A.}] Curves parameterized by
	\[\{(u_1,\ldots,v_3)\in\{-6,\ldots,6\}: u_1<u_2<u_3\text{ and }u_1<v_1\}.\]
	The inequalities normalize some of the permutations possible on the points that lead to isomorphic curves. We found $81070$ configurations in general position. However, because of the small values of the coefficients, there are many configurations with extra symmetries, so we find many isomorphic curves in the configurations. We find $33471$ distinct values for $D_{27}$, indicating that the collection contains many non-isomorphic curves as well.
    \item[\textbf{B.}] $70000$ curves with $u_1,\ldots,v_3$ chosen uniformly randomly from $\{-40,\ldots,40\}$, while discarding configurations that are not in general position. We originally found two quartics with matching $D_{27}$. Their configurations differed by a permutation, so the curves were isomorphic. We replaced one of them.
\end{itemize}
In each case, we used Magma's \texttt{MinimizeReducePlaneQuartic} to find a nicer plane model, with smaller discriminant. Since isomorphisms change $D_{27}$ by a $27^\text{th}$ power, it is easy to tell from discriminants when curves are not isomorphic.

Typical examples take less than 2 seconds to execute, with the quartic reduction step being one of the more expensive and less predictable steps. Occasional anomalies arise, where computation of a local image at a large prime is required. The whole experiment represents about 126 CPU hours of work.

\begin{example}
As a small, typical, example, take
\[
\begin{pmatrix}
u_1&u_2&u_3\\
v_1&v_2&v_3
\end{pmatrix}=
\begin{pmatrix}
 17& -7& -9\\
 35& 3& 9
\end{pmatrix}.
\]
We find
\[
\begin{split}
C&\colon
9x^4 - 60x^3y + 357x^2y^2 + 246xy^3 + 16y^4 - 42x^3z + 259x^2yz - 168xy^2z\\
&\quad- 141y^3z + 31x^2z^2 - 492xyz^2 + 207y^2z^2 + 42xz^3 - 27yz^3 + 9z^4=0
\end{split}\]
and $D_{27}(C)=2^{34}\cdot3^{20}\cdot5^{10}\cdot7^8\cdot11^2\cdot13^6\cdot17^4\cdot19^4\cdot29^2\cdot37^2\cdot41^2$. 
The curve $C$ has points everywhere locally.
We have $\dim_2 L'(2,\QQ;S)=72$ and $W_C=\bigcap_{v\in S} \tilde{\rho}_v^{-1}(W_v)$ has $\dim_2W_C=10$. We find that $W^1_C$ is non-empty, so it has $2^9$ elements. Computing $W^1_{C,T}=\{w\in W^1_C:\tilde{\rho}_v(w)\in\btgamma_v(C(k_v))\text{ for }v\in T\}$ is quite doable, for various sets $T$. We conclude that $C(\QQ)=\emptyset$ from, for example,
\[\Sel^{(2)}(C/\QQ)\subset W_{C,T}^1=\emptyset\text{ for }T=\{2,3,5\} \text{ or } \{
 31, 43, 47, 53, 71, 83\}.\]
Furthermore, from the data computed we can conclude that
\[\dim_2\Sel^{(2)}(\Jac_C/\QQ)=\dim_2 W_C^0=9,\]
so either $\Jac_C(\QQ)$ has free rank $3$ or $\Sha(\Jac_C/\QQ)[2]$ is non-trivial.
\end{example}

\subsection{Results of two-cover descent}
We executed Algorithm~\ref{A:TwoCoverDescent} on our samples, with $N=50$. This allowed us to determine the existence of rational points on each of the curves. We summarize our findings in Table~\ref{T:results}. 

\begin{table}
\def\arraystretch{1.3}
\[\begin{array}{c||c|c|c|c|c}
&C(\QQ_v)=\emptyset&\Sel^{(2)}(C/\QQ)=\emptyset&\parbox{8em}{\center rational bitangent contact point}&\parbox{6em}{\center other rational point}&\text{total}\\
\hline
\multirow{2}{*}{\bf A}&3654&42477&34025&4568&81070\\
& 4.5\%&52\%&42\%&5.6\%&100\%\\
\hline
\multirow{2}{*}{\bf B}&521&63926&4830&1244&70000\\
&0.7\%&91\%&6.9\%&1.8\%&100\%
\end{array}
\]
\caption{Two-cover descent results}\label{T:results}
\end{table}

When $\Sel^{(2)}(C/\QQ)\neq \emptyset$ and $C$ has no rational bitangent contact points (possibly a hyperflex), we search for a low-height nonsingular point using $\texttt{PointSearch}$ on either the sextic model \eqref{E:plane_sextic_model} or the plane quartic model we construct from it. These are the curves reported in the ``other rational point'' column.
For two curves we needed to search up to a height bound of $10^7$. 

Another interesting fact is that local obstructions are quite rare (having a local obstruction implies $\Sel^{(2)}(C/\QQ)=\emptyset$). Furthermore we only found $C(\QQ_p)=\emptyset$ for $p=2,11,23$, and only when $C$ has good reduction at those places. Proposition~\ref{P:small_primes} gives a partial explanation of this fact. This is quite contrary to the case of hyperelliptic curves, where local obstructions do tend to occur at primes of bad reduction.

\subsection{Information on rank and $\Sha$}\label{S:rank_sha}
We have
\[\Sel^{(2)}(\Jac_C/\QQ)= L'(\QQ,2;S)\cap \bigcap_{v\in S} \rho_v^{-1} \bgamma_v(\Pic^0(C/\QQ_v)).\]
Lemma~\ref{L:local_image_generated} gives a condition for when the sets on the right hand side are generated by differences of degree $1$ points. For a reasonable proportion of our curves, our data allows us to compute $\Sel^{(2)}(\Jac_C/\QQ)$. We list the results in Table~\ref{T:selmerranks}. In the rest of this section, we only consider these examples.
\begin{table}\label{T:selmerranks}
\[
\def\arraystretch{1.3}
\begin{array}{c|ccccccccc}
&6&7&8&9&10&11&12&13&\\
\hline
\textbf{A}&0.05\%&18.7\%&39.4\%&29.1\%&10.1\%&2.28\%&0.29\%&0.006\%&(n=31990)\\

\textbf{B}&0& 20.2\%&41.8\%&27.9\%& 8.71\%& 1.27\%&0.10\%& 0.006\%&(n=51685)\\
\end{array}
\]
\caption{Distribution of $\dim_2\Sel^{(2)}(\Jac_C/\QQ)$ where our data allowed its computation}
\end{table}

With $\Jac_C[2](\QQ)=(\ZZ/2\ZZ)^6$, we must have that the Selmer rank is at least $6$, but as one can see, the distribution has an average significantly higher than that. Part of that is explained by the fact that $C$, and hence the class $J^1\in H^1(k,\Jac_C)$ representing $\Pic^1(C/\QQ)$ is trivial everywhere locally. Since $C$ has quadratic points, we can pull the class back under the homomorphism $\Sel^{(2)}(\Jac_C/\QQ)\to H^1(k,\Jac_C)[2]$ and the preimage is likely independent of  the image of $\Jac_C[2](\QQ)$.

If $W^1_C=\emptyset$ in \eqref{E:selset_linalgbound} then it follows by \cite{Creutz2020}*{Theorem~5.3} that $J^1$ is not divisible by two in $\Sha(\Jac_C/\QQ)$, and therefore is nontrivial. This happens in about half the examples. 

Once we take into account that we expect that $\dim_{2}\Sel^{(2)}(\Jac_C/\QQ)\geq 7$, we find that the distributions in Table~\ref{T:selmerranks}, particularly for collection~\textbf{B}, match \cite{PoonenRains2012}*{Conjecture~1.1} rather well. This does require us to account for the fact that $J^1$ almost always has points everywhere locally. 

Generally, non-hyperelliptic curves tend to have points everywhere locally. Therefore, one actually should expect that Selmer groups of Jacobians of curves behave a little differently from those of general abelian varieties, because they tend to come equipped with an everywhere locally trivial torsor.

\section*{Acknowledgments}
We thank Michael Stoll for interesting discussions and suggestions on how to interpret the rank results in light of \cite{PoonenRains2012}, and an anonymous referee for helpful comments.

\nocite{bruin:ternary} 

\begin{bibdiv}
	\begin{biblist}
\bib{magma}{article}{
	title={The Magma algebra system. I. The user language},
	author={Bosma, The MAGMA computer algebra system is described in Wieb},
	author={Cannon, John},
	author={Playoust, Catherine},
	journal={J. Symbolic Comput.},
	volume={24},
	number={3--4},
	pages={235--265},
	date={1997},
}

\bib{BanwaitFiteLoughran2019}{article}{
	author={Banwait, Barinder},
	author={Fit\'{e}, Francesc},
	author={Loughran, Daniel},
	title={Del Pezzo surfaces over finite fields and their Frobenius traces},
	journal={Math. Proc. Cambridge Philos. Soc.},
	volume={167},
	date={2019},
	number={1},
	pages={35--60},
	issn={0305-0041},
}

\bib{BhargavaGrossWang2017}{article}{
	author={Bhargava, Manjul},
	author={Gross, Benedict H.},
	author={Wang, Xiaoheng},
	title={A positive proportion of locally soluble hyperelliptic curves over $\QQ$ have no point over any odd degree extension},
	note={With an appendix by Tim Dokchitser and Vladimir Dokchitser},
	journal={J. Amer. Math. Soc.},
	volume={30},
	date={2017},
	number={2},
	pages={451--493},
	issn={0894-0347},
}

\bib{bruin:ternary}{article}{
	author={Bruin, Nils},
	title={Some ternary Diophantine equations of signature $(n,n,2)$},
	conference={ title={Discovering mathematics with Magma}, },
	book={ series={Algorithms Comput. Math.}, volume={19}, publisher={Springer}, place={Berlin}, },
	date={2006},
	pages={63--91},
}

\bib{BPS:quartic}{article}{
	author={Bruin, Nils},
	author={Poonen, Bjorn},
	author={Stoll, Michael},
	title={Generalized explicit descent and its application to curves of genus 3},
	journal={Forum Math. Sigma},
	volume={4},
	date={2016},
	pages={e6, 80},
}

\bib{BS:twocov}{article}{
	author={Bruin, Nils},
	author={Stoll, Michael},
	title={Two-cover descent on hyperelliptic curves},
	journal={Math. Comp.},
	volume={78},
	date={2009},
	number={268},
	pages={2347--2370},
	issn={0025-5718},
}

\bib{chevweil:covers}{article}{
	author={Chevalley, C.},
	author={Weil, A.},
	title={Un th\'eor\`eme d'arithm\'etique sur les courbes alg\'ebriques},
	journal={C. R. Acad. Sci. Paris},
	volume={195},
	date={1932},
	pages={570--572},
}

\bib{Creutz2020}{article}{
	author={Creutz, Brendan},
	title={Generalized Jacobians and explicit descents},
	journal={Math. Comp.},
	volume={89},
	date={2020},
	number={323},
	pages={1365--1394},
	issn={0025-5718},
	review={\MR {4063321}},
	doi={10.1090/mcom/3491},
}

\bib{Coble1922}{article}{
	author={Coble, Arthur B.},
	title={Associated sets of points},
	journal={Trans. Amer. Math. Soc.},
	volume={24},
	date={1922},
	number={1},
	pages={1--20},
	issn={0002-9947},
}

\bib{CAG}{book}{
	author={Dolgachev, Igor V.},
	title={Classical algebraic geometry},
	note={A modern view},
	publisher={Cambridge University Press, Cambridge},
	date={2012},
	pages={xii+639},
	isbn={978-1-107-01765-8},
}

\bib{Gelfand-Kapranov-Zelevinsky2008}{book}{
	author={Gelfand, I. M.},
	author={Kapranov, M. M.},
	author={Zelevinsky, A. V.},
	title={Discriminants, resultants and multidimensional determinants},
	series={Modern Birkh\"auser Classics},
	note={Reprint of the 1994 edition},
	publisher={Birkh\"auser Boston Inc.},
	place={Boston, MA},
	date={2008},
	pages={x+523},
	isbn={978-0-8176-4770-4},
}

\bib{GrossHarris1981}{article}{
	author={Gross, Benedict H.},
	author={Harris, Joe},
	title={Real algebraic curves},
	journal={Ann. Sci. \'{E}cole Norm. Sup. (4)},
	volume={14},
	date={1981},
	number={2},
	pages={157--182},
	issn={0012-9593},
	review={\MR {631748}},
}

\bib{GrossHarris2004}{article}{
	author={Gross, Benedict H.},
	author={Harris, Joe},
	title={On some geometric constructions related to theta characteristics},
	conference={ title={Contributions to automorphic forms, geometry, and number theory}, },
	book={ publisher={Johns Hopkins Univ. Press, Baltimore, MD}, },
	date={2004},
	pages={279--311},
}

\bib{Lewis2019}{thesis}{
	author={Lewis, Daniel},
	title={An implementation of two-cover descent on plane quartic curves},
	organization={Simon Fraser University},
	date={2019},
	note={(M.Sc.~thesis)},
	eprint={https://summit.sfu.ca/identifier/etd20482},
}

\bib{milne:etale_cohomology}{book}{
	author={Milne, James S.},
	title={\'Etale cohomology},
	series={Princeton Mathematical Series},
	volume={33},
	publisher={Princeton University Press},
	place={Princeton, N.J.},
	date={1980},
	pages={xiii+323},
	isbn={0-691-08238-3},
}

\bib{PoonenRains2012}{article}{
	author={Poonen, Bjorn},
	author={Rains, Eric},
	title={Random maximal isotropic subspaces and Selmer groups},
	journal={J. Amer. Math. Soc.},
	volume={25},
	date={2012},
	number={1},
	pages={245--269},
	issn={0894-0347},
}

\bib{Thorne2016}{article}{
	author={Thorne, Jack A.},
	title={Arithmetic invariant theory and 2-descent for plane quartic curves},
	note={With an appendix by Tasho Kaletha},
	journal={Algebra Number Theory},
	volume={10},
	date={2016},
	number={7},
	pages={1373--1413},
	issn={1937-0652},
}
	\end{biblist}
\end{bibdiv}

\newpage
\appendix
\section{Local algorithms}
We use the notation from Section~\ref{S:local_image_finite}. The algorithms here are in the spirit of \cite{bruin:ternary}*{\S5} and \cite{BS:twocov}*{\S4}.

\begin{algorithm}
\caption{\textsc{HenselBalls}}\label{A:HenselBalls}
\Input{$f\in \calO[x,y]$, describing a smooth curve.}
\Output{A finite set $\{(x_t,y_t,e_t)\}_t$ of Hensel-liftable balls covering the $\calO$-valued solutions of $f(x,y)=0$.}
\For{$(x_0,y_0)\in \{(x_0,y_0)\in D^2: f(x_0,y_0)\equiv 0\pmod{\fp}\}$}{
	$R \gets \emptyset$\\
    \If{\upshape$\frac{\partial f}{\partial x}(x_0,y_0)\not\equiv 0\pmod{\fp}$ or $\frac{\partial f}{\partial y}(x_0,y_0)\not\equiv 0\pmod{\fp}$}{
    $R\gets R\cup \{(x_0,y_0,1)\}$.
}
    \Else{
$g\gets f(x_0+\pi x,y_0+\pi y)$\\
$T\gets \textsc{HenselBalls}(g/\mathrm{content}(g))$\\
$R\gets R\cup \{(x_0+\pi x_1,y_0+\pi y_1,e+1): (x_1,y_1,e)\in T\}$.
}
\Return $R$
}\end{algorithm}

\begin{algorithm}
	\caption{\textsc{LocalImage}}\label{A:LocalImage}
	\Input{$f\in\calO[x,y]$ describing a smooth plane quartic, together with its bitangent forms $\{\ell_{ij}\in \calO[x,y]: 0\leq i<j\leq 7\}$ and syzygetic data $\delta_\fq$ as in Definition~\ref{D:syzquad}.}
	\Output{Local image of $\bgamma_v$ on the given affine patch}
	Denote the mod-squares map by $\mu\colon \calO\setminus\{0\}\to k^\times/k^{\times 2}$.\\
	$T\gets \mathrm{\textsc{HenselBalls}}(f)$\\
	$R\gets\emptyset$\\
	\While{$T\neq \emptyset$}{
		Take $(x_0,y_0,e)$ from $T$,\\
		$L\gets [\ell_{ij}(x_0,y_0): 0\leq i<j\leq 7]$\\
		\For{$i=1,\ldots,7$}{
			\If{$\ord(L_i)<e-\ord(4)$}{
				$\gamma_i\gets \mu(L_i)$
			}
			\ElseIf{\upshape there is a syzygetic quadruple $\fq=\{\ell_i,\ell_a,\ell_b,\ell_c\}$ such that ${}
			\max(\ord(\ell_a(x_0,y_0)),\ord(\ell_b(x_0,y_0)),\ord(\ell_c(x_0,y_0)))<e-\ord(4)$}{
				$\gamma_i\gets \mu(\delta_\fq\ell_a(x_0,y_0)\ell_b(x_0,y_0)\ell_c(x_0,y_0))$
			}
			\Else(\hfill[\textbf{Remark:} we refine the covering]){
				$g\gets f(x_0+\pi^e x,y_0+\pi^e y)$\\
				$h\gets g/\mathrm{content}(g)$\hfill[\textbf{Remark:} $h\pmod{\fp}$ will be linear]\\
				\For{$(x_1,y_1)\in \{(x_1,y_1)\in D^2 : h(x_1,y_1)\equiv 0 \pmod \fp\}$}{
					$T\gets T\cup (x_0+\pi^e x_1,y_0+\pi^e y_1,e+1)$}

				\textbf{break} to while	
			}
		}
		Add $(\gamma_1,\ldots,\gamma_7)$ to $R$.
	}
	\Return $R$.
\end{algorithm}

\end{document}